\documentclass[11pt,reqno]{amsart} 
\usepackage[a4paper,left=24mm,right=24mm,top=32mm,bottom=30mm]{geometry} 
\usepackage{verbatim,amsmath,amssymb,enumitem} 
\usepackage{graphicx} 
\usepackage{color} 
\usepackage{esint}
\usepackage[utf8x]{inputenc}
\usepackage{hyperref} 
\usepackage{enumitem}
\usepackage{calc}

\newcommand{\N}{\mathbb{N}}
\newcommand{\R}{\mathbb{R}}

\newcommand{\eps}{\epsilon}
\newcommand{\p}{\partial}
\newcommand{\bq}{\begin{eqnarray}}
\newcommand{\eq}{\end{eqnarray}}
\newcommand{\ee}{\end{align*}}

\newcommand{\un}{u_{\mathrm{new}}}

\theoremstyle{plain}
\newtheorem{theorem}{Theorem}
\newtheorem{lemma}[theorem]{Lemma}
\newtheorem{proposition}[theorem]{Proposition}

\theoremstyle{definition}

\newtheorem*{notation*}{Notation}

\theoremstyle{remark}
\newtheorem{remark}[theorem]{Remark}


\begin{document} 

\title[Active transport in biological cells]
{Global existence for a bulk/surface model for active-transport-induced polarisation in biological cells}

\author[K.~Anguige]{Keith Anguige}
\address{Abteilung für Angewandte Mathematik,
	Albert-Ludwigs-Universität Freiburg,
	Hermann-Herder-Str. 10,
	79104 Freiburg i. Br., Germany}
\email{keith.anguige@mathematik.uni-freiburg.de} 

\author[M.~R\"oger]{Matthias R\"oger}
 \address{Fakult\"at f\"ur Mathematik, Technische Universit\"at Dortmund\\Vogelpothsweg 87,
 D-44227 Dortmund, Germany\\
 matthias.roeger@tu-dortmund.de}
 \email{matthias.roeger@tu-dortmund.de} 

\subjclass[2010]{35Q92,35B44,35K57}

 \keywords{Partial differential equations on surfaces, coupled bulk/surface processes, cell polarisation, active transport, blow-up}

\begin{abstract}
We consider a coupled bulk/surface model for advection and diffusion of interacting chemical species in biological cells. Specifically, we consider a signalling protein that can exist in both a cytosolic and a membrane-bound state, along with a variable that gives a coarse-grained description of the cytoskeleton. The main focus of our work is on the well-posedness of the model, whereby the coupling at the boundary is the main source of analytical difficulty. {\em A priori} $L^p$-estimates, together with classical Schauder theory, deliver global existence of classical solutions for small data on bounded, Lipschitz domains. For two physically reasonable regularised versions of the boundary coupling, we are able to prove global existence of solutions for {\em arbitrary} data. In addition, we prove the existence of a family of steady-state solutions of the main model which are parametrised by the total mass of the membrane-bound signal molecule.
\end{abstract}
\date{\today}
\thanks{The second author was partially funded by DFG under contract RO 3850/2-1, and thanks Juan Vel{\'a}zquez for interesting discussions.}

\maketitle
%
\section{Introduction}
\label{intro}
Cell-polarisation processes are the key to many biological functions, such as cell movement, differentiation and communication \cite{Nels03}. A prominent example is the budding of yeast, with the Rho GTPase protein Cdc42 as the main polarity marker \cite{CaSn02}: preceding any mechanical deformation of the cell, one sees the emergence and maintenance of an inhomogeneous distribution of regulatory proteins at the cell membrane and in the inner cytosolic domain \cite{PaBi07}. Various mechanisms have been identified that can contribute to this kind of symmetry breaking, whereby a distinction is commonly made between \emph{driven} and \emph{spontaneous} cell polarisation \cite{HBPV09}. The former is induced by either extracellular chemical gradients (chemotaxis) or historical markers at the membrane, while the latter is, instead, a consequence of interactions between different regulatory proteins and/or other constituents of the cell. Turing-type interactions between short-range activators and long-range inhibitors are one scenario that may lead to spontaneous polarisation \cite{GoPo08,RaeRoe14}. A distinct and well-documented mechanism  \cite{WAWL03,WWSL04,LiGu08} is a positive feedback between membrane-recruited signaling proteins and the cytoskeleton. The latter is built from actin monomers that polymerise to form long filaments: activated Cdc42 directs actin polymerisation at the membrane, and, in turn, Cdc42 is actively transported along the cytoskeleton filaments towards the membrane, leading to a positive feedback loop \cite{OnRa09}. Such active transport relies on a permanent energy input from ATP hydrolysis, and hence constitutes an example of an out-of-equilibrium system.

In this paper we will analyse a mathematical model for the kind of actin-mediated spontaneous cell polarisation just described. The basis of our work is a modification of a model introduced by Hawkins et al.~\cite{HBPV09} that uses a coarse-grained description of the actin-filaments, and that leads to a coupled bulk/surface reaction--diffusion--advection model with a nonlinearity in the bulk of chemotaxis type. Our mathematical analysis will demonstrate the well-posedness of the associated initial-value problem under a smallness-condition on the initial data.

In order to introduce the model, consider a domain $B\subset\mathbb{R}^3$ with boundary $\Gamma=\partial B$, modelling the cell and its outer cell-membrane. Consider further three chemical species (resp. their concentrations): the cytosolic concentration $V$ of the biochemical messenger, the concentration $u$ of the membrane-bound messenger, and the concentration of actin filaments in the cell, $c$.  While $V$ and $c$ live in the cell interior, $B$, the function $u$ has its domain of definition on the cell membrane, $\Gamma$. We assume that $V$ can diffuse in the cell interior, and that $V$ and $u$ are exchanged at the membrane. Moreover, $u$ can diffuse throughout the membrane (i.e., tangentially), and acts as a boundary source for the (very) diffusible species $c$, generating an advective velocity field, $\nabla c$, in $B$ which tends to transport $V$ towards the boundary. These model assumptions can be summarised as the following coupled system: consider a given time interval $(0,T)$, and functions $V,c:B\times [0,T)\to\R$, $u:\Gamma\times [0,T)\to\R$, such that
\bq
\p_tV & = & D\Delta V - \nabla\cdot(V\nabla c),\label{V}\\
0  & = & \Delta c - \alpha c \label{c}
\eq
in $B\times (0,T)$, subject to the flux conditions
\begin{align}
- \nu\cdot(D\nabla V - V\nabla c) &= q(V,u),\label{Vflux}\\
	\nu\cdot\nabla c &= \beta u\label{cflux}
\end{align} 
on $\Gamma\times (0,T)$, and such that $u$ satisfies
\begin{align}
\p_tu  =  d\Delta_{\Gamma} u + q(V,u)  \label{u}
\end{align}
on $\Gamma\times (0,T)$.

Here $\Delta_{\Gamma}$ is the Laplace-Beltrami operator on the manifold $\Gamma = \p B$, and we are assuming $\alpha>0, d>0$ and $D>0$. 
To close the system, we prescribe a constitutive law for the bulk--surface exchange term. In the present paper we will mainly consider a simple linear law
\begin{align}
	q(V,u) \,:=\, k_1V  - k_2u, \label{eq:def-q}
\end{align}
although two alternative (regularised) choices for $q(V, u)$, resp. the Neumann source in \eqref{cflux}, will briefly be examined towards the end of the paper, in Section \ref{reg}. The system \eqref{V}-\eqref{eq:def-q} is complemented by the initial conditions
\begin{align}
	V(\cdot,0)\,=\, V_0,\quad u(\cdot,0)\,=\, u_0, \label{eq:initial}
\end{align}
where $V_0:B\to \R$ and $u_0:\Gamma\to\R$ are given data that we assume to be smooth, bounded, integrable in space, and nonnegative.

Note that, by construction (and an integration by parts), the system (\ref{V})-(\ref{u}) conserves the total mass of $V$ and $u$, regardless of the choice of $q(V,u)$:
\begin{align}
	\int_B V(x,t)~dx + \int_{\Gamma} u(y,t)~dS(y) = \int_B V_0(x)~dx + \int_{\Gamma} u_0(y)~dS(y) =: M  \quad\text{ for all }t\geq 0. \label{eq:mass}
\end{align}

The system \eqref{V}-\eqref{eq:def-q} is a generalisation of the model introduced in \cite{HBPV09} and further analysed in \cite{CHMV12,CMMV13,MPCV16}. We will discuss below the results of this analysis in the context of our own findings.

Our system is somewhat similar to the celebrated Patlak--Keller--Segel (PKS) chemotaxis system  \cite{Patl53,Kese70} (see also the review \cite{Hors03}), which is given by
\begin{align*}
	\partial_tV\,=\, \nabla\cdot(\nabla V -\chi V\nabla c),\quad \partial_t c \,=\, \Delta c -\alpha c + V\qquad\text{ in }B\times (0,T),
\end{align*}
supplemented by initial and boundary conditions. 

A typical feature of PKS-type models is the existence of a critical space (in many cases $L^{\frac{n}{2}}$, where $n$ is the space dimension) and a threshold phenomenon: for sub-critical inital data one has global existence of solutions, whereas for data sufficiently large (in the sense of the critical norm), solutions blow-up in finite time. This phenomenon was first observed by J\"ager and Luckhaus \cite{JaLu92}, and is by now rather well-understood -- see, in particular, \cite{HeVe96,Vela04} and the review \cite{Hors03}. 

Compared to our model, the classical PKS model exhibits a much more direct feedback between advection of $V$ up the concentration-gradient of $c$ and the up-regulation of $c$ by $V$. More specifically, in \eqref{V}-\eqref{eq:def-q} the influx of $c$ is generated at the boundary by a source that is proportional to the concentration of membrane-bound messenger, which in turn is positively regulated by $V$ at the boundary. This shows a higher level of amplification and nonlocality in the feedback mechanism, and it suggests that any blow-up should occur at the boundary.

Our more indirect and nonlocal feedback, mediated by a diffusible messenger on $\Gamma$, ostensibly represents some kind of regularisation of PKS dynamics, but unfortunately makes the analysis rather more involved. In particular, it is not obvious that there is any Lyapunov functional present. We therefore have to do without the very efficient tools available in the analysis of the PKS model \cite{SeSo01,BlDP06}, relying instead on J\"ager--Luckhaus--type techniques \cite{JaLu92} of estimating $L^p$-norms, thus obtaining (conditional on the size of the initial data) {\em a-priori} bounds via Gagliardo--Nirenberg--Sobolev type estimates.  Whether or not our indirect feedback mechanism is so weak as to prevent blow-up of solutions turns out to be a rather difficult question to answer, and, in particular, we do not expect the kind of clear-cut threshold characterisation that is now available for the PKS model. 

Our first main result states that for both $B=\R^3_+$, the upper half space, and $B$ a smooth, bounded domain, a certain smallness condition on the initial data guarantees {\em a-priori} bounds for classical solutions; see Proposition \ref{prop} and Proposition \ref{prop_bdd_dom_Q_p}. The smallness condition is given in terms of the total mass $M$ and integrals of $p$-th powers, specifically the quantities
\begin{align*}
	\vartheta(p)\,:=\,& \int_B V^p_0\,dx + c_1\int_\Gamma  u^p_0\,dS(y) \qquad\text{ if }p\geq 2,\\
	\vartheta(p)\,:=\,& \int_B V^p_0\,dx + c_1\int_\Gamma  u^p_0\,dS(y) + \int_{\Gamma}u^2_0\,dS(y) \qquad\text{ if } 1 < p < 2.
\end{align*}
After this, we focus entirely on the case of bounded domains, $B$, and first show in Section \ref{globex3} that simultaneous explosion of both the $L^4(B)$-norm of $V$ and the $L^4(\Gamma)$-norm of $u$ is necessary for blow-up of classical solutions. Then we prove several results which eventually give unique existence of classical solutions in certain parabolic Hölder spaces, provided some $\vartheta(p)$ with $p>1$ is sufficiently small; see Theorem \ref{thm2}. 

The results mentioned up to this point all relate to the linear exchange law \eqref{eq:def-q} for $q$, and the linear flux condition \eqref{cflux}. The analysis is extended in Section \ref{reg}, where we show that if any of these constitutive laws is replaced by a suitable nonlinear relation that gives an {\em a-priori} bound on the respective flux, then globally existing classical solutions are guaranteed for arbitrary data.
 
Our final result, Theorem \ref{arb_steady}, is to prove the existence of a continuum of steady states with small $u$-mass. 

To add a little more context to our results, note that the system \eqref{V}-\eqref{eq:def-q} is a slight generalisation of the model introduced in \cite{HBPV09}. There, a reduction to two space dimensions, the choice $\alpha=0$, and a specific cell shape are considered. A linear-stability analysis of spatially homogeneous steady states is presented which reveals the possibility of spontaneous polarisation. Numerical simulations in \cite{CMMV13} and, for an extended model, in \cite{MPCV16} show the development of steep concentration gradients.
In \cite{CHMV12}, both a 
1-d reduction of the model from \cite{HBPV09} and, for arbitrary space dimensions, a two-variable reduction in the upper half-space are analysed, such that the membrane-bound messenger concentration is taken as the trace of $V$, rather than being an independent variable. A thorough mathematical analysis is presented, such that the model is shown to behave similarly to the PKS system, with, however, a different critical space: smallness of the initial data in $L^n$ implies global existence of weak solutions, while under suitable conditions on the initial data, 
and additional assumptions on the solution, finite-time blow-up occurs. 

Compared with the results in \cite{CHMV12}, we observe that the critical norm for $V$ in our model is between $L^1$ and any $L^p$, $p>1$, in contrast to the critical exponent $p=n=3$ in \cite{CHMV12}. In addition, in our case a smallness condition on the additional surface variable $u$ is necessary, $L^2(\Gamma)$ being critical here. Under these smallness conditions, we are able to give a rather satisfactory result on global existence. As already mentioned, in \cite{CHMV12} it was possible to give criteria for the failure of global existence, while here we have to leave this question open. The main obstacle is that the usual moment-estimate methods for proving blow-up in the case of (in some sense) concentrated data on a half-space seem to be very difficult to apply in our model.

\begin{notation*}
In the rest of the paper we will often simplify the notation by dropping the symbol $dx$ for integration over $B$, resp. $dS(y)$ for integration over the surface $\Gamma=\partial B$.  For $B$ bounded we denote by $|B|=\mathcal L^3(B)$ the volume of $B$ and by $|\Gamma|=\mathcal H^2(\Gamma)$ the surface area of the cell boundary.

Again for simplicity, we will just write $\nabla$ and $\Delta$ for the surface gradient $\nabla_\Gamma$ and Laplace--Beltrami operator $\Delta_\Gamma$ on $\Gamma$.

We let $B_T = \overline{B}\times [0,T]$, $\Gamma_T = \Gamma\times [0,T]$. By $W^{2,1}_p(B_T)$ we denote the parabolic Sobolev space of functions with one time and two space-derivatives in $L^p(B_T)$. Furthermore, we use the standard Hölder spaces $C^{k,\alpha}(\bar B)$, $k\in\N_0$, $0\leq\alpha<1$ with norms $\|\cdot\|_{C^{k,\alpha}(\bar B)}$, and the parabolic Hölder spaces $H^{l,l/2}(B_T)$, $H^{l,l/2}(\Gamma_T)$, $l>0$, with norms $|\cdot|^{(l)}$ (see \cite{lady2} for definition and properties).
\end{notation*}

%
\section{A-priori estimates for small data}\label{globex}
Throughout this section we assume that $(V,c,u)$ is a smooth, non-negative solution of \eqref{V}-\eqref{eq:initial} on a time interval $[0,T)$. Note that classical solutions corresponding to non-negative data on a bounded domain are automatically non-negative on their interval of existence - moreover, positivity is also preserved for smooth-enough solutions in a half-space (see the Appendix for a proof of these statements). Furthermore, by \eqref{eq:mass}, the total mass is conserved, and hence the $L^1(B)$ norm of $V(\cdot,t)$ and the $L^1(\Gamma)$ norm of $u(\cdot,t)$ are uniformly bounded in time. 

Our immediate aim is to obtain global {\em a priori} $L^p$-estimates for $V$ and $u$, given small data, which will preclude the concentration of mass, either in the interior or on the boundary. First we derive an estimate for the evolution of spatial integrals of $p$-th powers of $u$ and $V$.
\begin{lemma}\label{lem:p-powers}
For any $p>1$ we have
\bq
\frac{d}{dt} \int_B V^p & \leq & -\frac{4(p-1)}{p}D\int_B |\nabla V^{\frac{p}{2}}|^2 + \int_\Gamma p(-k_1 V^p +k_2V^{p-1}u) + \int_{\Gamma}(p-1)\beta uV^p, \label{eq:Vp-diff}\\
\frac{d}{dt} \int_\Gamma u^p & = &   -\frac{4(p-1)}{p}d\int_\Gamma |\nabla u^{\frac{p}{2}}|^2 + p\int_\Gamma (k_1 Vu^{p-1} -k_2u^p) \label{eq:up-diff}
\eq
and, with $c_1=(k_2/k_1)^{p-1}$, $D_p= \frac{4(p-1)}{p}D$, $d_p=\frac{4c_1(p-1)}{p}d$,
\bq
\frac{d}{dt}\left(\int_{\Gamma} c_1 u^p  + \int_B V^p\right) & \leq &
-D_p\int_B |\nabla V^{\frac{p}{2}}|^2 - d_p\int_{\Gamma}|\nabla_{\Gamma} u^{\frac{p}{2}}|^2
+ \beta (p-1)\int_{\Gamma}V^pu,\label{est1}
\eq
\end{lemma}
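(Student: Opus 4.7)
The plan is to derive each of the three inequalities by the standard energy method, testing each PDE against the appropriate power, integrating by parts, and then carefully bookkeeping the boundary terms. The key algebraic observation is the last one: the choice $c_1=(k_2/k_1)^{p-1}$ is tuned precisely so that the coupling contributions from the two boundary exchange terms collapse into a nonpositive quantity.

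For \eqref{eq:Vp-diff} I would multiply \eqref{V} by $pV^{p-1}$ and integrate over $B$. The diffusion piece gives
\[
pD\int_B V^{p-1}\Delta V \;=\; -\tfrac{4(p-1)}{p}D\int_B|\nabla V^{p/2}|^2 + pD\int_\Gamma V^{p-1}\nabla V\cdot\nu,
\]
and the advective piece, after one integration by parts, produces $-p\int_\Gamma V^p\nabla c\cdot\nu + (p-1)\int_B\nabla V^p\cdot\nabla c$. A second integration by parts on the bulk integral, combined with $\Delta c=\alpha c$ and $\nabla c\cdot\nu=\beta u$, turns it into $-(p-1)\alpha\int_B cV^p + (p-1)\beta\int_\Gamma V^p u - p\beta\int_\Gamma V^p u$. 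Regrouping the two normal-derivative terms on $\Gamma$ gives $p\int_\Gamma V^{p-1}\bigl(D\nabla V\cdot\nu - V\nabla c\cdot\nu\bigr) = -p\int_\Gamma V^{p-1}q(V,u) = p\int_\Gamma(-k_1V^p+k_2V^{p-1}u)$ by the flux condition \eqref{Vflux}. Finally, since $c\ge 0$ by the maximum principle applied to \eqref{c}--\eqref{cflux} with $u\ge 0$, the bulk term $-(p-1)\alpha\int_B cV^p$ is nonpositive and can be dropped, yielding \eqref{eq:Vp-diff}.

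Equation \eqref{eq:up-diff} is simpler: multiplying \eqref{u} by $pu^{p-1}$ and integrating over the closed surface $\Gamma$ (so no boundary contribution) gives
\[
\frac{d}{dt}\int_\Gamma u^p \;=\; -\tfrac{4(p-1)}{p}d\int_\Gamma|\nabla u^{p/2}|^2 + p\int_\Gamma u^{p-1}q(V,u),
\]
and substituting $q(V,u)=k_1V-k_2u$ gives the stated identity.

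To obtain \eqref{est1} I add $c_1$ times \eqref{eq:up-diff} to \eqref{eq:Vp-diff}. The gradient terms produce exactly $-D_p\int_B|\nabla V^{p/2}|^2 - d_p\int_\Gamma|\nabla u^{p/2}|^2$, and the uncanceled boundary term $(p-1)\beta\int_\Gamma V^p u$ stays. What remains is the combined coupling contribution
\[
p\int_\Gamma\bigl[-k_1V^p+k_2V^{p-1}u + c_1k_1Vu^{p-1}-c_1k_2u^p\bigr],
\]
and the crux is to show this is $\le 0$. Substituting $\tilde u:=(k_2/k_1)u$ and using $c_1=(k_2/k_1)^{p-1}$, the bracketed expression becomes $k_1\bigl[-V^p+V^{p-1}\tilde u+V\tilde u^{p-1}-\tilde u^p\bigr]$, which factors as $-k_1(V-\tilde u)(V^{p-1}-\tilde u^{p-1})$; since $t\mapsto t^{p-1}$ is monotone on $[0,\infty)$ for $p>1$, this factor is $\le 0$ pointwise. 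The only real obstacle here is identifying the right change of variable (or equivalently the right Young-type inequality) so that $c_1$ appears with precisely the exponent $p-1$; once seen, the cancellation is algebraic. Integrating and combining with the gradient and residual boundary terms then gives \eqref{est1}.
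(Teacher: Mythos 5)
Your proposal is correct and follows essentially the same route as the paper: testing \eqref{V} with $pV^{p-1}$ and \eqref{u} with $pu^{p-1}$, integrating by parts, using \eqref{c}, \eqref{cflux}, \eqref{Vflux} together with $c\geq 0$ to discard the bulk term $-(p-1)\alpha\int_B cV^p$, and then taking the weighted sum with $c_1=(k_2/k_1)^{p-1}$. Your explicit factorisation of the coupling terms as $-k_1(V-\tilde u)(V^{p-1}-\tilde u^{p-1})\leq 0$ with $\tilde u=(k_2/k_1)u$ is exactly the paper's observation that $-(x-y)(x^{p-1}-y^{p-1})\leq 0$ on $\mathbb{R}^+_0$, just written out in detail.
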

\begin{proof}
First we test (\ref{V}) with $V^{p-1}$, and integrate by parts, to obtain
\begin{eqnarray}
\frac{d}{dt}\left(\int_B V^p\right) & = & -\frac{4(p-1)}{p}D\int_B |\nabla V^{\frac{p}{2}}|^2 - \int_{\Gamma}p(k_1V - k_2u)V^{p-1} + p\int_B V\nabla V^{p-1}\cdot\nabla c\nonumber\\ 
   & \leq & -\frac{4(p-1)}{p}D\int_B |\nabla V^{\frac{p}{2}}|^2 - \int_{\Gamma}p(k_1V - k_2u)V^{p-1} +  \beta (p-1)\int_{\Gamma}V^pu,\label{est}
\end{eqnarray}
where we used (\ref{c}), (\ref{cflux}), $c\geq 0$ and
\begin{eqnarray}
p\int_B V\nabla V^{p-1}\cdot\nabla c & = & (p-1)\int_B\nabla V^p\cdot\nabla c\nonumber\\
    & = & (p-1)\left(-\int_B V^p\Delta c + \int_{\Gamma}V^p\nu\cdot\nabla c \right)\nonumber\\
    & \leq & (p-1)\beta\int_{\Gamma}V^p u, \label{nonlinear}
\end{eqnarray}
to get the last line in (\ref{est}). This establishes (\ref{eq:Vp-diff}).

Next, (\ref{eq:up-diff}) follows easily by testing (\ref{u}) with $u^{p-1}$ and integrating the Laplacian term by parts. Finally, (\ref{est1}) follows by taking the appropriate weighted sum of (\ref{eq:Vp-diff}) and (\ref{eq:up-diff}), and noting that $-(x-y)(x^{p-1}-y^{p-1})\leq 0$ for all $x,y\in\mathbb{R}^{+}_0$, $p\geq 1$.
\end{proof}

\subsection{Unbounded domains: the half-space}
We first restrict ourselves to the case $B=\R^3_+\,=\, \{x\in\R^3\,:\, x_3>0\}$, $\Gamma = \R^2\times\{0\}\mathrel{\widehat=} \R^2$, and assume that $u$ and $V$ are bounded and integrable in space for all times. We then obtain {\em a-priori} bounds for any $L^p$ norm, provided the initial data are sufficiently small.
 
In the following we set
\begin{align}
Q_p(t) \,:=\, 
\left\{
\begin{array}{lcl}
 \int_B V^p(x,t)\,dx + \int_\Gamma  c_1u^p(y,t)\,dS(y) & : & p\geq 2\\
 & &\\
 \int_B V^p(x,t)\,dx + \int_\Gamma  c_1u^p(y,t)\,dS(y) + \int_{\Gamma}u^2\,dS(y) & : & 1 < p < 2
\end{array}
\right.\label{eq:def-qp}
\end{align}
with $c_1=(k_2/k_1)^{p-1}$, as in Lemma \ref{lem:p-powers}.

\begin{proposition}\label{prop}
We have the following properties:
\begin{enumerate}
\item 
For any $r>1$, there exist $\delta_1,\delta_2>0$, depending only on the model parameters such that the condition
\begin{align}
	M\,<\,\delta_1\quad\text{ and }\quad Q_r(0)\,\leq\,\delta_2,  \label{eq:cond-r}
\end{align}
implies that
\begin{align}
	&\sup_{0\leq t<T} Q_r(t) \leq C(\delta_1,\delta_2)<\infty, \label{eq:a15}
\end{align}
where $C$ is a modulus of continuity in its second argument.
\item 
For all $p\geq 1$ there exists a constant $M_p>0$ such that if \eqref{eq:cond-r} holds for some $r\in(1,\infty)$ and $M<M_p$ then
\begin{align}
	&\sup_{0\leq t<T} Q_p(t) \leq C(M_p,Q_p(0))<\infty, \label{eq:a17}
\end{align}
where $C$ is a modulus of continuity in its second argument.
\end{enumerate}
\end{proposition}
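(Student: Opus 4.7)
My strategy is to bound the ``bad'' boundary term $\beta(p-1)\int_\Gamma V^p u$ in \eqref{est1} by a small multiple of the dissipation $\|\nabla V^{p/2}\|_{L^2(B)}^2 + \|\nabla u^{p/2}\|_{L^2(\Gamma)}^2$ plus a harmless remainder, so that it can be absorbed into the dissipation; the smallness of the multiplier will come from mass conservation and the hypothesis on $Q_r(0)$. I would first apply H\"older on $\Gamma$,
$$\int_\Gamma V^p u \,\leq\, \|V^{p/2}\|_{L^{2a}(\Gamma)}^2 \cdot \|u\|_{L^{a/(a-1)}(\Gamma)},$$
then control the first factor by a trace/Gagliardo--Nirenberg inequality on $\R^3_+$ applied to $w=V^{p/2}$, interpolating between $\|\nabla w\|_{L^2(B)}$ and $\|w\|_{L^{2/p}(B)}=M^{p/2}$, and control the second factor by a GN inequality on $\Gamma\cong\R^2$ applied to $u^{p/2}$, interpolating between $\|\nabla u^{p/2}\|_{L^2(\Gamma)}$ and a low-exponent norm of $u^{p/2}$. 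Tuning $a$ so that the resulting exponents of the two gradients sum to exactly $2$ (the scaling-critical case), and invoking Young's inequality, should produce
$$\int_\Gamma V^p u \,\leq\, C M^{\kappa}\bigl(\|\nabla V^{p/2}\|_{L^2(B)}^2 + \|\nabla u^{p/2}\|_{L^2(\Gamma)}^2\bigr) + R,$$
for some $\kappa>0$ and a remainder $R$ involving $M$ and lower $L^q$-norms already encoded in $Q_p$.

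The dichotomy between $p\geq 2$ and $1<p<2$ in the definition of $Q_p$ stems from the GN step on $\Gamma$: using $\|u\|_{L^1(\Gamma)}\leq M$ as base endpoint is adequate once $p\geq 2$, but for $1<p<2$ this endpoint lies too far from $L^p$, and one needs instead an additional control of $\|u\|_{L^2(\Gamma)}^2$, which is precisely the extra term built into $Q_p$ in that range. Correspondingly, for $p<2$ I would run \eqref{eq:up-diff} with $p$ replaced by $2$ in parallel, treating the cross term $\int_\Gamma V u$ by the same H\"older/trace/GN machinery, and package the two estimates as a coupled inequality for $Q_p(t)$.

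After absorption, \eqref{est1} becomes a differential inequality of the form $\dot Q_p + c\cdot(\text{dissipation})\leq F(M,Q_p)$, with $F$ small when $M$ and $Q_p$ are small. Part~(i) should then follow by a standard continuity/bootstrap argument: pick $\delta_1,\delta_2$ so that the right-hand side is strictly negative on the set $\{Q_r\leq 2Q_r(0)\}\cap\{M\leq\delta_1\}$, so that any attempted exit from this set at some first time is obstructed by the sign of $\dot Q_r$. For part~(ii), the uniform bound on $Q_r$ provided by (i) (or by hypothesis) supplies an extra interpolation endpoint in the H\"older/GN chain above; this relaxes the argument so that any $p\geq 1$ can be handled, provided only that $M$ is small enough, with no smallness requirement on $Q_p(0)$.

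The main obstacle, I expect, is the careful bookkeeping of H\"older/trace/GN exponents across three different spaces ($B=\R^3_+$, $\Gamma$, and the trace between them) so that scaling matches the dissipation exactly and the extracted power $\kappa$ of $M$ is strictly positive; without this, the absorption fails and the continuity argument does not close. For $1<p<2$, the need to couple in an auxiliary inequality for $\int_\Gamma u^2$ adds a further technical layer, but the underlying scheme is the same.
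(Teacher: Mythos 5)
Your overall scheme (absorb the boundary term into the dissipation, then run a continuity argument, and couple in an auxiliary $\int_\Gamma u^2$ inequality when $1<p<2$) is in the right spirit, but the central inequality you propose cannot hold, and this is not a matter of careful exponent bookkeeping: it is obstructed by scaling. Consider the mass-preserving concentration family $V_\lambda(x)=\lambda^3V(\lambda x)$, $u_\lambda(y)=\lambda^2u(\lambda y)$ on $B=\R^3_+$, $\Gamma=\R^2$. Then $\int_\Gamma V_\lambda^p u_\lambda=\lambda^{3p}\int_\Gamma V^pu$, while $\|\nabla V_\lambda^{p/2}\|_{L^2(B)}^2=\lambda^{3p-1}\|\nabla V^{p/2}\|_{L^2(B)}^2$, $\|\nabla u_\lambda^{p/2}\|_{L^2(\Gamma)}^2=\lambda^{2p}\|\nabla u^{p/2}\|_{L^2(\Gamma)}^2$, and $M$ and all sub-critical norms you allow in the remainder $R$ grow strictly slower than $\lambda^{3p}$. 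If you try to match the bad term by a product $\|\nabla V^{p/2}\|_{L^2}^{2\alpha}\|\nabla u^{p/2}\|_{L^2}^{2\beta/p}$ of total gradient degree $2$ (i.e.\ $\alpha+\beta/p=1$), the exponent balance forces $\alpha(3p-1)+2p(1-\alpha)=3p$, i.e.\ $\alpha=p/(p-1)>1$, which is impossible; and a positive power $M^\kappa$ does not help since $M$ is scale-invariant. So the estimate
\begin{align*}
\int_\Gamma V^p u \,\leq\, C M^{\kappa}\bigl(\|\nabla V^{p/2}\|_{L^2(B)}^2 + \|\nabla u^{p/2}\|_{L^2(\Gamma)}^2\bigr) + R
\end{align*}
with $\kappa>0$ and sub-critical $R$ is false: the problem is not mass-subcritical, and the smallness multiplier cannot be extracted from mass conservation alone.

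The multiplier must itself be a scaling-critical quantity, and this is exactly what the paper does: by H\"older with exponent $2$ and the sharp half-space trace inequality one gets $\int_\Gamma uV^p\leq \|u\|_{L^2(\Gamma)}\|V^{p/2}\|_{L^4(\Gamma)}^2\leq C\|u\|_{L^2(\Gamma)}\|\nabla V^{p/2}\|_{L^2(B)}^2$, where $\|u\|_{L^2(\Gamma)}$ has precisely the missing scaling weight. Smallness of $\|u\|_{L^2(\Gamma)}$ is then \emph{not} a consequence of $M$ being small; it must be propagated along the flow by the functional itself — via interpolation $\|u\|_{L^2}\leq M^{\frac{r-2}{2(r-1)}}\|u\|_{L^r}^{\frac{r}{2(r-1)}}$ when $r\geq 2$, or, when $1<r<2$ (where $L^2$ is no longer between $L^1$ and $L^r$), by adding $\int_\Gamma u^2$ to the functional and estimating its evolution — which is why $L^2(\Gamma)$ is the critical space for $u$ and why $Q_p$ is defined with the extra term for $p<2$. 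Once the absorption is done this way there is in fact no remainder $R$ at all on the half-space (the trace inequality has no lower-order term), $Q_r$ is monotone as long as the smallness persists, and your continuity argument closes; your treatment of part~(ii) (using the bound from part~(i) as the new smallness input, with only $M<M_p$ required) then matches the paper's. As written, however, your key display is unattainable and the proof does not close.
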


\begin{proof}
Let $r>1$ be given, and consider the following trace inequality for half-spaces (see, for example, \cite{Naza06}), which holds for any $f\in H^1(B)$:
\begin{align}
	\|f\|_{L^4(\Gamma)}\,\leq\, C\|\nabla f\|_{L^2(B)}. \label{eq:trace}
\end{align}
We therefore deduce, for any $p>1$,
\begin{align*}
	\int_\Gamma uV^p \,\leq\, \|u\|_{L^2(\Gamma)}\|V^{\frac{p}{2}}\|_{L^4(\Gamma)}^2 \,\leq\, C\|u\|_{L^2(\Gamma)}\|\nabla V^{\frac{p}{2}}\|_{L^2(B)}^2. 
\end{align*}
In particular, using this inequality for $p=r$, we deduce from \eqref{est1} that
\begin{align}
	\frac{d}{dt}\left(\int_{\Gamma} c_1 u^r  + \int_B V^r\right) \,&\leq\,-D_r \int_B |\nabla V^{\frac{r}{2}}|^2 + \beta(r-1)\int_\Gamma V^r u \notag\\
	&\leq\, -D_r \int_B |\nabla V^{\frac{r}{2}}|^2 + C\beta(r-1)\|V^{\frac{r}{2}}\|_{L^4(\Gamma)}^2\| u\|_{L^2(\Gamma)} \notag\\
	&\leq\, 	-(D_r - C\beta(r-1)\|u\|_{L^2(\Gamma)})\int_B |\nabla V^{\frac{r}{2}}|^2. \label{eq:prop2-1}
\end{align}
In the case $r\geq 2$, we see by interpolation that
\begin{align*}
	\|u\|_{L^2(\Gamma)} \,\leq\, \|u\|_{L^1(\Gamma)}^{\frac{r-2}{2(r-1)}}\|u\|_{L^r(\Gamma)}^{\frac{r}{2(r-1)}}\,\leq\, M^{\frac{r-2}{2(r-1)}}\|u\|_{L^r(\Gamma)}^{\frac{r}{2(r-1)}}.
\end{align*}
Using this in the above inequality yields
\begin{align}
	\frac{d}{dt}\left(\int_{\Gamma} c_1 u^r  + \int_B V^r\right) \,&\leq\, 	-\Big(D_r - C\beta(r-1)M^{\frac{r-2}{2(r-1)}}\|u\|_{L^r(\Gamma)}^{\frac{r}{2(r-1)}}\Big)\int_B |\nabla V^{\frac{r}{2}}|^2. \label{eq:prop2-2}
\end{align}
In particular, if $\delta_1$, $\delta_2$ are chosen so small that
\begin{align*}
	D_r - C\beta(r-1)M^{\frac{r-2}{2(r-1)}}(c_1^{-1}\delta_2)^{\frac{1}{2(r-1)}}\,\geq\,0
\end{align*}
when $M<\delta_1$, we have for all times that
\begin{align}
	\int_{\Gamma} c_1 u^r  + \int_B V^r \,&\leq\, \delta_2, \label{eq:prop2-3}
\end{align}
provided this condition holds at $t=0$. This proves \eqref{eq:a15} for $r\geq 2$. 

We continue to assume $r\geq 2$, and next consider an arbitrary $p\geq 1$. As in \eqref{eq:prop2-1}, \eqref{eq:prop2-2} we infer, using (\ref{eq:prop2-3}) that
\begin{align*}
	\frac{d}{dt}\left(\int_{\Gamma} c_1 u^p  + \int_B V^p\right) 
	\,&\leq\, -\Big(D_p - C\beta(p-1)M^{\frac{r-2}{2(r-1)}}\|u\|_{L^r(\Gamma)}^{\frac{r}{2(r-1)}}\Big)\int_B |\nabla V^{\frac{p}{2}}|^2 \notag\\
	\,&\leq\, -\Big(D_p - C\beta(p-1)M^{\frac{r-2}{2(r-1)}}(c_1^{-1}\delta_2)^{\frac{1}{2(r-1)}}\Big)\int_B |\nabla V^{\frac{p}{2}}|^2. 
\end{align*}
We therefore obtain that the right-hand side is non-positive for all times if $M<M_p$ for $M_p$ chosen sufficiently small. This proves \eqref{eq:a17} for $r\geq 2, p\geq 2$. By the interpolation inequality
\begin{align*}
	\|V\|_{L^p(B)} \,\leq\, \|V\|_{L^1(B)}^\theta\|V\|_{L^r(B)}^{1-\theta},\quad\theta=\frac{r-p}{rp-p},
\end{align*}
the corresponding inequalities for $\|u\|_{L^p(\Gamma)}, \|u\|_{L^2(\Gamma)}$, and by  \eqref{eq:a15} we obtain that \eqref{eq:a17} also holds for $r\geq 2, p< 2$.

We now turn to the case $1<r<2$. From \eqref{eq:up-diff} we have
\begin{align}
	\frac{d}{dt} \int_\Gamma u^2 \,&\leq\,   -2d\int_\Gamma |\nabla u|^2 + 2k_1\int_\Gamma Vu -c_1u^2.\label{eq:up-diff2}
\end{align}
Observe that $Vu -c_1u^2\leq 0$ if $V\leq c_1u$, that $Vu-c_1u^2\leq V^ru$ if $V\geq 1$, and that $Vu -c_1u^2 \,\leq\, c_1^{1-r}V^ru^{2-r}$ holds in the set $\{V>c_1u\}\cap\{V<1\}$. Hence,
\begin{align}
	Vu -c_1u^2 \,\leq\, V^ru + c_1^{1-r}V^ru^{2-r}.\label{eq:Vu}
\end{align}
Moreover, again by \eqref{eq:trace}, we can argue as above to get 
\begin{align*}
	\int_\Gamma V^ru \,\leq\, C_r\|u\|_{L^2(\Gamma)}\int_B |\nabla V^{\frac{r}{2}}|^2 
\end{align*}
and 
\begin{align*}
	\int_\Gamma V^ru^{2-r} \,\leq\, \|u\|_{L^{4-2r}(\Gamma)}^{2-r}\|V^{\frac{r}{2}}\|_{L^4(\Gamma)}^2\,\leq\, C\|u\|_{L^{4-2r}(\Gamma)}^{2-r} \int_B |\nabla V^{\frac{r}{2}}|^2. 
\end{align*}
Thus, for $1<r\leq\frac{3}{2}$, (\ref{eq:prop2-1}) and the interpolation inequality $\|u\|_{L^{4-2r}(\Gamma)}\leq\|u\|_{L^1(\Gamma)}^{1-\theta}\|u\|_{L^2(\Gamma)}^{\theta}$, $\theta = \frac{3-2r}{2-r}$, lead to
\begin{align*}
\frac{d}{dt}\left(\int_{\Gamma}c_1 u^r + \int_B V^r + \int_{\Gamma} u^2\right)\leq -\left(D_r - C\|u\|_{L^2(\Gamma)} - CM^{r-1}\|u\|_{L^2(\Gamma)}^{3-2r}\right)\int_B |\nabla V^{\frac{r}{2}}|^2,
\end{align*}
which proves \eqref{eq:a15} for $1<r\leq\frac{3}{2}$.

Finally, in order to close the gap between $r=\frac{3}{2}$ and $r=2$, we argue as follows. Suppose $r\in \left(\frac{3}{2},2\right)$. Then, for $s\in (1,\frac{3}{2}]$, interpolation enables us to control $Q_s(0)$ in terms of $Q_r(0)$ and $M$. In particular, by taking $Q_r(0)$ and $M$ sufficiently small,
 \eqref{eq:a15} holds with $r$ replaced by $s$. In particular, $\|u\|_{L^2(\Gamma)}(t)\leq D_r/C\beta(p-1)$ for $M_r>0$ sufficiently small. Then \eqref{eq:prop2-1} implies that
\begin{align*}
\left(\int_{\Gamma}c_1 u^r + \int_B V^r\right)(t)\leq\left(\int_{\Gamma}c_1 u^r + \int_B V^r\right)(0),
\end{align*}
thus proving \eqref{eq:a15} for $r\in \left(\frac{3}{2},2\right)$. 

Finally, in the case $1<r<2$, we can use very similar arguments to show that \eqref{eq:a17} follows from (\ref{eq:a15}) and (\ref{eq:prop2-1}).
\end{proof}

\subsection{Bounded domains}
We next consider the case of a bounded, open set $B\subset\R^3$, such that the smooth boundary $\Gamma=\partial B$ has a finite number of connected components. The estimates used will be similar to those which were useful in the half-space case, but for bounded domains we have to take into account the fact that Gagliardo--Sobolev embeddings of the form \eqref{eq:trace} only hold for functions with mean-value zero, thus making the calculations somewhat more complicated. In the sequel, we will make frequent use of the mean values $v_{\frac{p}{2}} := \frac{1}{|B|}\int_B V^{\frac{p}{2}}$ for $p\geq 1$. 

Our first result is the following. 

\begin{proposition}\label{prop_bdd_dom_Q_p}
Let $B\subset\mathbb{R}^3$ be bounded, and suppose $p\in (1,\infty)$. Then there exist a constant $c_p>0$ and a modulus of continuity $\sigma_p$, such that if $Q_p(0)<c_p$ then
\begin{align}
	\sup_{t>0} Q_p(t)\,&\leq\, \sigma_p(Q_p(0)),  \label{eq:pro3}\\	
	\sup_{t>0} \int_\Gamma u^2(t)\,&\leq\, \sigma_p(Q_p(0)).
	 \label{eq:pro3-2}
\end{align}
\end{proposition}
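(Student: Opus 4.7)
The plan is to mirror the proof of Proposition \ref{prop}, adapting for two features of the bounded-domain setting. The first is that the trace inequality $\|V^{p/2}\|_{L^4(\Gamma)} \leq C\|\nabla V^{p/2}\|_{L^2(B)}$ is no longer available without a mean-value correction; the second, which works in our favour, is that Hölder's inequality on $B$ and $\Gamma$ immediately gives $M \leq C_{p,B,\Gamma}\,Q_p(0)^{1/p}$, so smallness of $Q_p(0)$ automatically forces smallness of the conserved mass $M$. Accordingly, no separate smallness hypothesis on $M$ is required, in contrast with Proposition \ref{prop}.

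To bound the critical boundary source $\int_\Gamma V^p u$ on the right-hand side of \eqref{est1}, I would decompose $V^{p/2} = (V^{p/2} - v_{p/2}) + v_{p/2}$ in terms of the bulk mean $v_{p/2}$ and combine the bounded-domain trace theorem with Poincaré--Wirtinger to obtain $\|V^{p/2} - v_{p/2}\|_{L^4(\Gamma)} \leq C\|\nabla V^{p/2}\|_{L^2(B)}$. Cauchy--Schwarz on $\Gamma$ then yields
\begin{align*}
\int_\Gamma V^p u \,\leq\, \Big( C\|\nabla V^{p/2}\|_{L^2(B)}^2 + C|\Gamma|^{1/2} v_{p/2}^2 \Big)\|u\|_{L^2(\Gamma)}.
\end{align*}
The first term is absorbed into $-D_p\|\nabla V^{p/2}\|_{L^2}^2$ provided $\|u\|_{L^2(\Gamma)}$ is small, which follows from interpolation with mass conservation: $\|u\|_{L^2}^2 \leq M^{(p-2)/(p-1)}(Q_p/c_1)^{1/(p-1)}$ for $p \geq 2$, while for $1 < p < 2$ the quantity $\int u^2$ is already part of $Q_p$. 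The mean-value contribution $v_{p/2}^2$ is controlled using the elementary Cauchy--Schwarz bound $v_{p/2} \leq |B|^{-1/2}\|V^{p/2}\|_{L^2(B)}$ and, for $p \neq 2$, refined by Jensen's inequality (for $p < 2$, giving $v_{p/2}^2 \leq (M/|B|)^p$) or by the interpolation $\|V\|_{L^{p/2}}\leq M^{1/(p-1)}\|V\|_{L^p}^{(p-2)/(p-1)}$ combined with Young's inequality (for $p > 2$, producing a term of the form $\delta \int V^p + C(\delta,M)$ with $C(\delta,M)\to 0$ as $M\to 0$).

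To close the estimate, the Poincaré--Wirtinger inequality in the form $\|\nabla V^{p/2}\|_{L^2(B)}^2 \geq C_P^{-1}\big(\int V^p - |B| v_{p/2}^2\big)$ converts the surviving dissipation into a negative multiple of $\int V^p$ modulo $M$-dependent remainders; an analogous step is applied to $u$ via \eqref{eq:up-diff}. The outcome is a differential inequality
\begin{align*}
\dot Q_p(t)\,\leq\,-\kappa Q_p(t) + C_1 M^\gamma
\end{align*}
with $\kappa,\gamma>0$, so that ODE comparison gives $\sup_t Q_p(t) \leq \max\{Q_p(0),\, C_1 M^\gamma/\kappa\}$, which via $M \leq CQ_p(0)^{1/p}$ provides the required modulus of continuity \eqref{eq:pro3}; the absorption step requires a standard continuity/bootstrap argument initiated from $Q_p(0)$ small. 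For $1 < p < 2$ the extra term $\int u^2$ in $Q_p$ is accommodated by adjoining \eqref{eq:up-diff} at $p = 2$ to \eqref{est1} and applying the decomposition \eqref{eq:Vu} to the cross term $\int_\Gamma V u$, exactly as in the half-space proof. The bound \eqref{eq:pro3-2} is then immediate for $p\in(1,2)$ and, for $p \geq 2$, follows from the above interpolation for $\|u\|_{L^2}^2$ together with \eqref{eq:pro3}. I expect the main technical obstacle to be the persistence of $v_{p/2}$ in the mean-value-corrected trace estimate, which has no analogue in the half-space case and, \emph{a priori}, couples the boundary source back to the bulk $L^p$-norm being controlled; the resolution is precisely the interpolation/Young step above, which extracts the required factor of $M$.
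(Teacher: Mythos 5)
Your proposal is correct, and its core estimates coincide with those in the paper: your decomposition $V^{p/2}=(V^{p/2}-v_{p/2})+v_{p/2}$ combined with the trace theorem and Poincar\'e--Wirtinger is exactly the inequality the paper imports from Ladyzhenskaya--Ural'tseva to get \eqref{eq:alt_uV_est}, the absorption of the gradient term via smallness of $\|u\|_{L^2(\Gamma)}$ (interpolated against the conserved mass for $p\geq2$, built into $Q_p$ for $p<2$), the control of the mean values through $M$, the observation $M\leq CQ_p(0)^{1/p}$, and the treatment of $1<p<2$ via \eqref{eq:up-diff2}, \eqref{eq:Vu} and a mean-corrected bound for $\int_\Gamma V^pu^{2-p}$ all appear in the paper's argument. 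Where you genuinely diverge is in how the estimate is closed: the paper runs a dichotomy -- either \eqref{eq:decreasing} holds and $Q_p$ is monotone decreasing, or \eqref{eq:nondecreasing} holds and then Poincar\'e--Wirtinger plus the interpolation \eqref{eq:b30-2} give an \emph{absolute} bound on $Q_p$ by a modulus of $M$ -- whereas you convert the surviving dissipation into linear damping and close with the single differential inequality $\dot Q_p\leq -\kappa Q_p+C_1M^\gamma$ and ODE comparison. Your route yields slightly more (uniform exponential relaxation towards an $O(M^\gamma)$ neighbourhood) at the cost of a more delicate bookkeeping of the Young-absorbed terms $\delta\int_BV^p$, $\delta\int_\Gamma u^p$ against the Poincar\'e constant; the paper's dichotomy avoids that absorption inside the "non-decreasing" branch. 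Two points you should make explicit: the surface mean $u_{p/2}$ must be handled, for $p\geq2$, by the same interpolation-plus-Young step as $v_{p/2}$ (Jensen goes the wrong way when $p/2\geq1$; this is the paper's step leading to \eqref{eq:rho-p}), and the continuity/bootstrap you invoke to sustain $\|u\|_{L^2(\Gamma)}$ below threshold is the same maximal-interval argument the paper carries out around \eqref{eq:u2-small}, so it should be spelled out rather than only cited as "standard".
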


\begin{proof}
Using \cite[Section 2 (2.27)]{lady1} we obtain
\begin{align}
\int_{\Gamma} uV^p  \leq  2\int_{\Gamma} u\left(V^{\frac{p}{2}} - v_{\frac{p}{2}}\right)^2 + 2v_{\frac{p}{2}}^2\int_{\Gamma} u \leq  C\|u\|_2\int_B|\nabla V^{\frac{p}{2}}|^2 + 2v_{\frac{p}{2}}^2\int_{\Gamma} u.\label{eq:alt_uV_est}
\end{align}

Hence, \eqref{est1} implies
\bq
\frac{d}{dt}\left(\int_B V^p + c_1\int_{\Gamma} u^p\right) & \leq & -D_p \int_B |\nabla V^{\frac{p}{2}}|^2 - d_p\int_{\Gamma} |\nabla u^{\frac{p}{2}}|^2\nonumber\\
 & + & C_p\|u\|_{L^2(\Gamma)}\int_B |\nabla V^{\frac{p}{2}}|^2 + C_p'v_{\frac{p}{2}}^2\int_{\Gamma}u\nonumber\\
 & \leq & -\left(\frac{D_p}{2} - C_p\|u\|_{L^2(\Gamma)}\right)\int_B |\nabla V^{\frac{p}{2}}|^2\nonumber\\
 & - & \frac{D_p}{2}\int_B |\nabla V^{\frac{p}{2}}|^2 - d_p\int_{\Gamma}|\nabla u^{\frac{p}{2}}|^2 +  C_p'v_{\frac{p}{2}}^2\int_{\Gamma}u.\label{eq:alt_Qp_est}
\eq

We will show that, for $c_p>0$ sufficiently small,
\begin{align}
	\|u\|_{L^2(\Gamma)}(t)\,<\,\frac{D_p}{2C_p} \label{eq:u2-small}
\end{align}
holds for all $t\geq 0$. For the moment, we assume that this property holds up to some particular time, $t$.

We deduce from \eqref{eq:alt_Qp_est} that $\int_B V^p + c_1\int_{\Gamma}u^p$ is decreasing, provided
\begin{align}
	C_p'v_{\frac{p}{2}}^2\int_{\Gamma}u\leq\max\left\{\frac{D_p}{2}\int_B |\nabla V^{\frac{p}{2}}|^2, d_p\int_{\Gamma}|\nabla u^{\frac{p}{2}}|^2\right\}. \label{eq:decreasing}
\end{align}
Now consider the case where (\ref{eq:decreasing}) does {\em not} hold, and hence
\begin{align}
	\max\left\{\frac{D_p}{2}\int_B |\nabla V^{\frac{p}{2}}|^2, d_p\int_{\Gamma}|\nabla u^{\frac{p}{2}}|^2\right\}\,<\, C_p'v_{\frac{p}{2}}^2\int_{\Gamma}u. \label{eq:nondecreasing}
\end{align}

Note that
\begin{align*}
	\int_B|V^{\frac{p}{2}} - v_{\frac{p}{2}}|^2  = - v_{\frac{p}{2}}^2|B| + \int_B V^p, \end{align*}
and therefore by the Poincar\'{e}-Wirtinger inequality
\begin{align}
	\int_B V^p \leq  v_{\frac{p}{2}}^2|B| + C_p\int_B |\nabla V^{\frac{p}{2}}|^2\leq v_{\frac{p}{2}}^2\left[2|B| + \frac{2C_pC_p'}{D_p}\int_{\Gamma}u\right]. \label{eq:b30-1}
\end{align}

Moreover, for $p\geq 2$, we obtain by interpolation
\begin{align}
	v_{\frac{p}{2}}^{\frac{2}{p}}\leq\frac{1}{|B|^{\frac{2}{p}}}\|V\|_{L^1(B)}^{\frac{1}{p-1}}\|V\|_{L^p(B)}^{\frac{p-2}{p-1}}, \label{eq:b30-2}
\end{align}
and deduce from \eqref{eq:b30-1} that
\begin{align*} 
	\|V\|_{L^p(B)}^p \,\leq\, C_p\|V\|_{L^1(B)}^{\frac{p}{p-1}}\|V\|_{L^p(B)}^{\frac{p(p-2)}{p-1}}\left[1 + \left(\int_{\Gamma} u \right)\right],
\end{align*}
hence
\begin{align} 
	\|V\|_{L^p(B)}^p \,\leq\, C_p\|V\|_{L^1(B)}^{p}\left[1 + \left(\int_{\Gamma} u \right)^{p-1}\right]\,\leq\, C_pM^p(1-M^{p-1}). \label{eq:rho-p0}
\end{align}
Also, by the Poincar\'{e}-Wirtinger inequality, \eqref{eq:nondecreasing}, \eqref{eq:b30-2} and \eqref{eq:rho-p0},
\begin{align}\label{u_poinc}
	\int_{\Gamma} u^p \leq u_{\frac{p}{2}}^2|\Gamma| + C_p\int_{\Gamma}|\nabla u^{\frac{p}{2}}|^2 \,\leq\, u_{\frac{p}{2}}^2|\Gamma| + Cv_{\frac{p}{2}}^2\int_\Gamma u\,\leq\, u_{\frac{p}{2}}^2|\Gamma| + \rho_p^{(1)}(M),
\end{align}
where $\rho_p^{(1)}$ is a modulus of continuity.
Furthermore, again for $p\geq 2$,
\begin{align*}
	u_{\frac{p}{2}}^{\frac{2}{p}}\leq\frac{1}{|\Gamma|^{\frac{2}{p}}}\|u\|_{L^1(\Gamma)}^{\frac{1}{p-1}}\|u\|_{L^p(\Gamma)}^{\frac{p-2}{p-1}}.
\end{align*}

Thus, by \eqref{u_poinc} and Young's inequality
\begin{align*}
	\int_{\Gamma} u^p \leq C_p M^{\frac{p}{p-1}}\|u\|_{L^p(\Gamma)}^{\frac{p(p-2)}{p-1}} + \rho_p^{(1)}(M)\,\leq\, \frac{1}{2}\int_{\Gamma} u^p + \rho_p^{(2)}(M) +\rho_p^{(1)}(M),
\end{align*}
where $\rho_p^{(2)}$ is another modulus of continuity, and hence 
\begin{align}
	\int_{\Gamma}u^p\,\leq\, \rho_p^{(3)}(M),\quad\text{ for some modulus of continuity }\rho_p^{(3)}. \label{eq:rho-p}
\end{align}

Putting this together, we can now argue that, for $p\geq 2$, conclusion \eqref{eq:pro3} holds. Thus, choosing $c_p>0$ sufficiently small, we first obtain, by interpolation, that 
$\|u\|_{L^2(\Gamma)}(t)\,\leq\,\frac{D_p}{3C_p}$ is satisfied initially. Next, consider the maximal time interval $[0,t_0]$ such that \eqref{eq:u2-small} holds. We have the following dichotomy: as long as \eqref{eq:decreasing} holds, $Q_p$ is decreasing with time, while if \eqref{eq:decreasing} does {\em not} hold, then by \eqref{eq:rho-p0} and \eqref{eq:rho-p} $Q_p$ is bounded by a modulus of continuity $\rho_p(M)$, and hence by Hölder's inequality $Q_p$ is bounded by a modulus of continuity $\tilde{\sigma}_p(Q_p(0))$. Either way, $Q_p$ remains bounded by a modulus of continuity ${\sigma}_p(Q_p(0))$. Since $p\geq 2$, this also gives, for $c_p>0$ sufficienly small, $\|u\|_{L^2(\Gamma)}(t)\,\leq\,\frac{D_p}{3C_p}$ on $[0,t_0]$, and hence \eqref{eq:u2-small} and \eqref{eq:pro3} must be satisfied for all time. By interpolation, we may also conclude that \eqref{eq:pro3-2} holds for $p\geq 2$.

Now consider $1<p<2$. First, we again use \cite[Section 2 (2.27)]{lady1}, and observe that
\begin{align}
	\int_{\Gamma} u^{2-p}V^p  \leq  C\|u^{2-p}\|_2\int_B|\nabla V^{\frac{p}{2}}|^2 + 2v_{\frac{p}{2}}^2\int_{\Gamma} u^{2-p} \,\leq\, C(\Gamma)\|u\|_2^{2-p} \int_B|\nabla V^{\frac{p}{2}}|^2+ 2v_{\frac{p}{2}}^2\int_{\Gamma} u^{2-p}.\label{eq:alt_uV_est-alt}
\end{align}
Instead of (\ref{eq:alt_Qp_est}), we now have, by (\ref{eq:up-diff2}), (\ref{eq:Vu}), (\ref{eq:alt_uV_est}), and \eqref{eq:alt_uV_est-alt},
\begin{align}
	&\frac{d}{dt}\left(\int_B V^p + c_1\int_{\Gamma} u^p + \int_{\Gamma}u^2\right) \nonumber\\
	\leq & -D_p \int_B |\nabla V^{\frac{p}{2}}|^2 - d_p\int_{\Gamma} |\nabla u^{\frac{p}{2}}|^2 -2d\int_{\Gamma}|\nabla u|^2\nonumber\\
 & +  C_p\left(\|u\|_{L^2(\Gamma)} + \|u\|_{L^2(\Gamma)}^{2-p}\right)\int_B |\nabla V^{\frac{p}{2}}|^2 + C_p'v_{\frac{p}{2}}^2\int_{\Gamma}(u + u^{2-p})\nonumber\\
  \leq & -\left(\frac{D_p}{2} - C_p\left(\|u\|_{L^2(\Gamma)} + \|u\|_{L^2(\Gamma)}^{2-p}\right)\right)\int_B |\nabla V^{\frac{p}{2}}|^2 - 2d\int_{\Gamma}|\nabla u|^2\nonumber\\
 & -  \frac{D_p}{2}\int_B |\nabla V^{\frac{p}{2}}|^2 - d_p\int_{\Gamma}|\nabla u^{\frac{p}{2}}|^2 +  C_p'v_{\frac{p}{2}}^2\int_{\Gamma}(u + u^{2-p}).\label{eq:alt_Qp_est2}
\end{align}

Thus, by the same argument as before, if $\|u\|_2$ is small enough relative to $\frac{D_p}{C_p}$, then either the right-hand side is negative or we have
\begin{align*}
	\int_B V^p \leq v_{\frac{p}{2}}^2\left[2|B| + \frac{2C_pC_p'}{D_p}\int_{\Gamma}(u + u^{2-p})\right],
\end{align*}
which implies (since $p<2$ and $\int_\Gamma u^{2-p}\leq C(\Gamma)\|u\|_1^{2-p}$)
\begin{align*}
	\int_B V^p \leq C_p M^{p}(2|B| + \rho(M)),
\end{align*} 
along with
\begin{align*}
\int_{\Gamma} u^p \leq  2u_{\frac{p}{2}}^2|\Gamma| + C_pv_{\frac{p}{2}}^2\int_{\Gamma}(u + u^{2-p}) \leq  \rho(M),
\end{align*}
by (\ref{u_poinc}), and
\begin{align*}
	\int_{\Gamma}u^2 = \int_\Gamma \left(u-\fint_\Gamma u\right)^2 + |\Gamma|M^2 \leq M^2|\Gamma| + C\int_{\Gamma}|\nabla u|^2\leq M^2|\Gamma| + C\rho\int_{\Gamma}(u + u^{2-p}) \leq \rho(M)
\end{align*}
where $\rho$ always stands for some modulus of continuity, thus giving {\em a priori} control of $Q_p(t)$ by a modulus of continuity if $c_p$ is small enough. This proves \eqref{eq:pro3}, and finally \eqref{eq:pro3-2} follows for $1<p<2$ from \eqref{eq:pro3} and the definition \eqref{eq:def-qp}.
\end{proof}

\section{Some remarks on blow-up in bounded domains}\label{globex3}

We have not been able to obtain global $L^p$-control of solutions to (\ref{V})-(\ref{eq:initial}) for arbitrary data, or indeed to find a counterexample with $L^p$ blow-up in finite time. In particular, decisive vanishing-moment estimates seem to be very difficult to obtain in our model. This being the case, we will now simply make a few observations about qualitative solution behaviour near any possible blow up on a bounded domain, $B$.

Thus, returning to the last term in (\ref{est1}), we see by Young's inequality that
\begin{align}
\int_{\Gamma}V^pu \leq \int_{\Gamma}\frac{1}{p}\frac{u^p}{\eps^p} + \frac{(p-1)}{p}\eps^{p/(p-1)}V^{p^2/(p-1)},
\end{align}
for arbitrary $\eps>0$.

Now set $p=4$, and use the trace inequality
\begin{align}
\|f\|_{L^2(\Gamma)} \leq C\left(\|\nabla f\|_{L^{\frac{3}{2}}(B)} + \|f\|_{L^{\frac{3}{2}}(B)}\right)
\end{align}
applied to $f=V^{\frac{8}{3}}$. Thus,
\bq
\int_{\Gamma} V^{\frac{16}{3}}  & = & \|V^{\frac{8}{3}}\|_{L^2(\Gamma)}^2\nonumber\\
 & \leq & C\left(\left(\int_B|\nabla V^{\frac{8}{3}}|^{\frac{3}{2}}\right)^{\frac{4}{3}}  + \left(  \int_B V^4\right)^{\frac{4}{3}}\right)\nonumber\\
 & = & C\left(\left(\int_B|\nabla V^2|^{\frac{3}{2}}V\right)^{\frac{4}{3}}  + \left(  \int_B V^4\right)^{\frac{4}{3}}\right)\nonumber\\
 &  \leq & C\left(\|V\|_{L^4(B)}^{\frac{4}{3}}\int_B |\nabla V^2|^2 + \left(\int_B V^4\right)^{\frac{4}{3}}\right),
\eq
where we used H\"{o}lder's inequality to get the last line, and hence (\ref{est1}) implies
\begin{eqnarray*}
\frac{d}{dt}\left(\int_{\p B} c_1 u^4  + \int_B V^4\right) & \leq & -3\left(D - C\eps^{4/3}\|V\|_{L^4(B)}^{4/3}\right)\int_B |\nabla V^{2}|^2\nonumber\\
 & + & \frac{C}{\eps^4}\int_{\p B}u^4- 3d\int_{\p B}|\nabla u^{2}|^2\nonumber\\
 & + & C\eps^{4/3}\left(\int_B V^4\right)^{\frac{4}{3}}. 
\end{eqnarray*}

Given this, we make a special choice of $\eps$, namely $\eps^{4/3} = D/(2C\|V\|_{L^4(B)}^{4/3}+1)$, which leads to
\bq
\frac{d}{dt}\left(\int_{\Gamma} c_1 u^4  + \int_B V^4\right) & \leq & -\frac{3}{2}D\int_B |\nabla V^2|^2 - 3d\int_{\Gamma}|\nabla u^2|^2\nonumber\\
 & + & C(\|V\|^4_{L^4(B)}+1)\|u\|^4_{L^4(\Gamma)}
  +   C\left(\int_B V^4\right)^{\frac{4}{3}}\|V\|_{L^4}^{-4/3}\label{eq:b39-1}\\
 & \leq & C\int_B V^4\int_{\Gamma}u^4 + C\int_B V^4. \nonumber
\eq

Thus, any finite-time $L^4$ blow-up has to occur {\em simultaneously} for $V$ and $u$ (since $Q_4$ can grow at most exponentially if one of $\|u\|_{L^4(\Gamma)}$ and $\|V\|_{L^4(B)}$ remains bounded), and this can be used to show that the trace of $V$ also has to blow up at the same time. Specifically, from (\ref{u}) we get, by Duhamel's principle,
\begin{align}
e^{k_2t}u(y,t) = e^{td\Delta}u(y,0) + \int_0^t e^{(t-s)d\Delta}(k_1e^{k_2s}V(y,s))~ds, \label{eq:40}
\end{align}
and therefore
\begin{align*}
	\limsup_{t~\nearrow~T}\left\|\int_0^t e^{(t-s)d\Delta_{\Gamma}}(k_1e^{k_2s}V(s))~ds\right\|_{L^4(\Gamma)} = \infty,
\end{align*}
where $T$ is the blow-up time. 

Next, we have a smoothing estimate for the heat semi-group on a compact 2-manifold, of the form \cite{TayIII}, Ch.15, Eq (1.15),
\begin{align}
\|e^{td\Delta_{\Gamma}}\|_{\mathcal{L}(W^{r,q}(\Gamma),W^{s,p}(\Gamma))}\leq c t^{-\left(\frac{1}{q} - \frac{1}{p}\right) - (1/2)(s-r)},\label{heat}
\end{align}
for any $p\geq q$ and $s\geq r$ (here $W^{s,p}$ is the $L^p$-type Sobolev space of order $s$).

Hence, setting $p=4$ and $s=0$, there exists a sequence $t_k\rightarrow T^-$ such that $\|V(t_k)\|_{W^{r,q} (\Gamma)}\rightarrow\infty$ as $k\rightarrow\infty$, provided $\frac{1}{q} - \frac{1}{4} - \frac{r}{2}<1$. In particular, $\|V(t_k)\|_{L^1(\Gamma)}$ blows up as $k\rightarrow\infty$.

Note that $L^4$-control of $u$ and $V$ implies global existence of classical solutions, as follows.

Assuming that $Q_4$ is bounded, we have from (\ref{eq:b39-1}), on any interval $[0,T)$ of existence,
\begin{align}
	\int_0^t\|\nabla V^2\|_{L^2(B)}^2 \leq C(1+t), \label{eq:b42-1}
\end{align}
while Duhamel's principle \eqref{eq:40}, and (\ref{heat}) with $p=\infty, r=0, q=8$, yield

\begin{align}
	\|e^{k_2t}u\|_{W^{s,\infty}(\Gamma)}(t) &\leq C + Ce^{k_2t}\int_0^t(t-t')^{-\frac{1}{8}-\frac{s}{2}}\|V\|_{L^8(\Gamma)}(t')~dt' \label{eq:b42-2}
\end{align}
for $s\geq 0$.

Next, the trace inequality \cite{Biez} yields
\begin{align*}
	\|V\|_{L^8(\Gamma)} \,=\, \|V^2\|_{L^4(\Gamma)}^{\frac{1}{2}}\leq C\left(\|\nabla V^2\|_{L^2(B)} + \|V^2\|_{L^2(B)}\right)^{\frac{1}{2}}.
\end{align*}
The right-hand side of this equation is $L^4$-integrable in time, by \eqref{eq:b42-1} and the uniform boundedness of $Q_4$. Thus, we deduce from \eqref{eq:b42-2}, by H\"{o}lder's inequality, that
\begin{align}
	\|e^{k_2t}u(t)\|_{W^{s,\infty}(\Gamma)} &\leq  C + C(T)\|V\|_{L^4(0,T;L^8(\Gamma))}\Big(\int_0^t(t-t')^{-\frac{1}{6}-\frac{2s}{3}}\,dt'\Big)^{\frac{3}{4}}, \label{eq:Wsinfty}
\end{align}
which implies that $u(t)$ stays in $C^{\alpha}(\Gamma)$ 
 for any $\alpha\in [0,\frac{5}{4})$, by Morrey's inequality. This is enough to continue classical solutions indefinitely, by the Schauder theory developed below - see the proof of Theorem \ref{thm2}. 

\begin{remark}
Since $L^4$-boundedness of $u$ or $V$ is enough to guarantee global existence of classical solutions, we have the following: failure of global existence on a bounded domain first implies that either $\|u\|_{L^4(\Gamma)}$ or $\|V\|_{L^4(B)}$ becomes infinite, then that {\em both} $\|u\|_{L^4(\Gamma)}$ and $\|V\|_{L^4(B)}$ blow up, and hence that $\|V\|_{W^{r,q}(\Gamma)}$ has to blow up for any $(r,q)$ satisfying $\frac{1}{q} - \frac{1}{4} - \frac{r}{2}<1$, which in particular yields that the trace of $V$ blows up in $H^{-1}(\Gamma)$ and $W^{-2,4+\epsilon}(\Gamma)$, for any $\eps>0$. We will prove in Theorem \ref{thm2} that smallness of $Q_r(0)$, for any $r>1$, prevents any such blow-up from occurring.
\end{remark}

\section{Existence and uniqueness of local classical solutions}\label{class}

Classical solutions of our model can be constructed with the aid of standard Schauder theory for elliptic and parabolic equations in H\"{o}lder spaces -- one advantage of working in these spaces, as compared to Sobolev spaces, is that $V$ has the same regularity on the boundary, $\Gamma$, as in the bulk, $B$.

Before coming to the existence proof, we first prove a uniqueness result for solutions on bounded domains.\begin{proposition}\label{prop:unique}
Suppose $B\subset\mathbb{R}^3$ is bounded. Then, for a given pair $(V_0,u_0)$ of initial data, there is at most one classical solution $(V,u,c)$ of the system (\ref{V})-(\ref{eq:initial}).
\end{proposition}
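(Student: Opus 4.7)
The plan is a standard $L^2$-energy estimate for the difference of two solutions, combined with Grönwall's inequality, the only subtlety being that the advection coefficient $\nabla c$ has itself to be controlled via an elliptic estimate driven by the surface difference. Let $(V_i,u_i,c_i)$, $i=1,2$, be two classical solutions on $[0,T]$ with the same initial data, and set $W=V_1-V_2$, $w=u_1-u_2$, $C=c_1-c_2$. Using the identity $V_1\nabla c_1-V_2\nabla c_2 = V_1\nabla C+W\nabla c_2$, the triple $(W,w,C)$ satisfies the linearised system with zero initial data, linear boundary exchange $k_1W-k_2w$, and Neumann datum $\nu\cdot\nabla C=\beta w$ on $\Gamma$.

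First, I would derive an elliptic estimate for $C$: testing $\Delta C-\alpha C=0$ by $C$ and using the Neumann condition gives
\begin{align*}
\int_B |\nabla C|^2 + \alpha\int_B C^2 \,=\, \beta\int_\Gamma w\,C,
\end{align*}
whence, via the trace inequality applied to $C$, one obtains $\|\nabla C\|_{L^2(B)}\leq K\|w\|_{L^2(\Gamma)}$ with $K$ depending only on $B$, $\alpha$, $\beta$. Next I would test the $W$-equation by $W$ and the $w$-equation by $w$, integrate by parts and combine the boundary contributions (the $k_1W-k_2w$ flux terms enter with opposite signs on the bulk side and the surface side), arriving at
\begin{align*}
\frac{1}{2}\frac{d}{dt}\Big(\|W\|_{L^2(B)}^2+\|w\|_{L^2(\Gamma)}^2\Big)+D\|\nabla W\|_{L^2(B)}^2+d\|\nabla w\|_{L^2(\Gamma)}^2 = \int_B \nabla W\cdot(V_1\nabla C+W\nabla c_2)+\int_\Gamma (k_1W-k_2w)(w-W).
\end{align*}

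Since both solutions are classical on $[0,T]$, the quantities $\|V_1\|_{L^\infty}$ and $\|\nabla c_2\|_{L^\infty}$ are finite. Young's inequality, together with the elliptic estimate for $C$, absorbs the $V_1\nabla C$ contribution into $\tfrac{D}{4}\|\nabla W\|_{L^2(B)}^2+C(T)\|w\|_{L^2(\Gamma)}^2$, and $W\nabla c_2$ into $\tfrac{D}{4}\|\nabla W\|_{L^2(B)}^2+C(T)\|W\|_{L^2(B)}^2$; the boundary reaction term is bounded by $C(T)\int_\Gamma(W^2+w^2)$, and the trace $\int_\Gamma W^2$ is handled by the multiplicative trace inequality $\|W\|_{L^2(\Gamma)}^2\leq\varepsilon\|\nabla W\|_{L^2(B)}^2+C_\varepsilon\|W\|_{L^2(B)}^2$ with $\varepsilon$ small enough to be absorbed into the remaining $\tfrac{D}{2}\|\nabla W\|_{L^2(B)}^2$. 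The upshot is a differential inequality
\begin{align*}
\frac{d}{dt}\Big(\|W\|_{L^2(B)}^2+\|w\|_{L^2(\Gamma)}^2\Big)\leq C(T)\Big(\|W\|_{L^2(B)}^2+\|w\|_{L^2(\Gamma)}^2\Big),
\end{align*}
which, together with $W(0)=0$, $w(0)=0$, forces $W\equiv 0$ and $w\equiv 0$ by Grönwall; the elliptic estimate then yields $C\equiv 0$. I expect the elliptic control of $\|\nabla C\|_{L^2(B)}$ by $\|w\|_{L^2(\Gamma)}$ to be the key step, as it is precisely what closes the nonlocal bulk--surface coupling at the $L^2$ level.
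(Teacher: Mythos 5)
Your proposal is correct and follows essentially the same route as the paper: an $H^1$-estimate for $c_1-c_2$ in terms of $\|u_1-u_2\|_{L^2(\Gamma)}$ from the elliptic equation, an $L^2$-energy estimate for the differences $V_1-V_2$ and $u_1-u_2$ using the $L^\infty$-bounds available for classical solutions on compact time intervals, and Grönwall. The only (harmless) deviations are the symmetric choice in splitting $V_1\nabla c_1-V_2\nabla c_2$ and your use of the trace-interpolation inequality $\|W\|_{L^2(\Gamma)}^2\leq\varepsilon\|\nabla W\|_{L^2(B)}^2+C_\varepsilon\|W\|_{L^2(B)}^2$ to absorb the boundary term, where the paper instead exploits the good sign of $-k_1\|V_1-V_2\|_{L^2(\Gamma)}^2$.
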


\begin{proof}
Suppose that we have two smooth solution triples $(V_i,u_i,c_i)$, $i=1,2$, with the same initial data, on a time interval $[0,T)$.
Then, from (\ref{c}), (\ref{cflux}), we have
\begin{align}
\int_{B}(c_1-c_2)\Delta(c_1-c_2) - \alpha\int_{B}(c_1-c_2)^2=0,
\end{align}
such that integrating by parts and using the Trace Theorem gives
\bq
\int_B|\nabla(c_1-c_2)|^2 + \alpha\int_B (c_1-c_2)^2  & = & \beta\int_{\Gamma}(c_1 - c_2)(u_1 - u_2)\nonumber\\
     & \leq & \beta\|c_1 - c_2\|_{L^2(\Gamma)}\|u_1 - u_2\|_{L^2(\Gamma)}\nonumber\\
     & \leq & C\beta\|c_1 - c_2\|_{H^1(B)}\|u_1 - u_2\|_{L^2(\Gamma)}.
\eq
Dividing through by $\|c_1-c_2\|_{H^1(B)}$, and using $\alpha>0$, we therefore get
\begin{align}
\|c_1 - c_2\|_{H^1(B)}\leq C_{\alpha}\beta\|u_1 - u_2\|_{L^2(\Gamma)}.\label{cdiff}
\end{align}
Next, taking differences in (\ref{V}), applied to $V_1$ and $V_2$, and then testing with $V_1-V_2$  there follows, after an integration by parts and application of the flux condition,
\begin{eqnarray*}
	&&\frac{1}{2}\frac{d}{dt}\|V_1 - V_2\|_{L^2(B)}^2 \\
	& = & -\int_B \nabla(V_1-V_2)\cdot(D\nabla(V_1-V_2) - (V_1\nabla c_1 - V_2\nabla c_2))\nonumber\\
   &  & +\int_{\Gamma} (V_1-V_2)(-k_1(V_1-V_2) + k_2(u_1 - u_2))\nonumber\\
   & = & -D\|\nabla (V_1-V_2)\|_{L^2(B)}^2 + \int_B\nabla(V_1-V_2)((V_1-V_2)\nabla c_1 + V_2\nabla(c_1-c_2))\nonumber\\
   &  & -k_1\|V_1-V_2\|_{L^2(\Gamma)}^2 + k_2\int_{\Gamma}(V_1-V_2)(u_1-u_2)\nonumber\\ 
   & \leq & -D\|\nabla(V_1 -V_2)\|_{L^2(B)}^2 - k_1\|V_1-V_2\|_{L^2(\Gamma)}^2\nonumber\\
   &  & +C\|\nabla(V_1 -V_2)\|_{L^2(B)}\left(\|\nabla c_1\|_{C^0(\bar B)}\|V_1 - V_2\|_{L^2(B)} + C_{\alpha}\beta\|V_2\|_{C^0(\bar B)}\|u_1 -u_2\|_{L^2(\Gamma)}\right)\nonumber\\
   &  & +k_2\|V_1 - V_2\|_{L^2(\Gamma)}\|u_1 - u_2\|_{L^2(\Gamma)},
\end{eqnarray*}
where we used (\ref{cdiff}) to estimate $\|\nabla(c_1-c_2)\|_{L^2(\Gamma)}$.

Hence, since $\|\nabla c_1\|_{C^0(\bar B)}$ and $\|V_2\|_{C^0(\bar B)}$ remain bounded on any time interval $[0,\tilde T]$ with $\tilde T<T$,  Young's inequality yields, for any $t<\tilde T$,
\begin{align}
\frac{d}{dt}\|V_1 - V_2\|_{L^2(B)}^2 \leq C(k_1,k_2,\tilde T)\left(\|V_1 -V_2\|_{L^2(B)}^2 + \|u_1 -u_2\|_{L^2(\Gamma)}^2\right) - \frac{k_1}{2}\|V_1-V_2\|_{L^2(\Gamma)}^2,\label{Vdiff}
\end{align}
while substituting $u_1$, resp. $u_2$, into (\ref{u}), taking differences and testing with $u_1-u_2$ leads to
\begin{align}
\frac{d}{dt}\|u_1 - u_2\|_{L^2(\Gamma)}^2\leq 2k_1\|V_1-V_2\|_{L^2(\Gamma)}\|u_1 - u_2\|_{L^2(\Gamma)} - 2k_2\|u_1 - u_2\|_{L^2(\Gamma)}^2.\label{udiff}
\end{align}
Thus, taking the sum of (\ref{Vdiff}) and (\ref{udiff}), and then using Young's inequality once more to deal with the bad product term, yields
\begin{align}
\frac{d}{dt}\left(\|u_1 - u_2\|_{L^2(\Gamma)}^2 + \|V_1 - V_2\|_{L^2(B)}^2\right)\leq C(k_1,k_2,\tilde T)\left(\|u_1 - u_2\|_{L^2(\Gamma)}^2 + \|V_1 - V_2\|_{L^2(B)}^2\right),
\end{align}
and Gronwall's inequality plus (\ref{cdiff}) shows that $(V_1,u_1,c_1)=(V_2,u_2,c_2)$ on $[0,\tilde T]$ for any $\tilde T<T$.
\end{proof}

With the aid of Schauder's fixed-point theorem, we next prove short-time existence of smooth solutions to (\ref{V})-(\ref{eq:initial}).

\begin{proposition}\label{prop4}
Assume that $B$ and the initial data $(V_0,u_0)$ are of class $C^{2,\gamma}$, $\gamma>0$,
and satisfy the compatibility condition
\begin{align*}
	-\nu \cdot D\nabla V_0 \,=\, -\beta u_0V_0 + k_1V_0 -k_2u_0\quad\text{ on }\Gamma.
\end{align*}
Then there exists a unique classical solution $(V,u,c)$ of (\ref{V})-(\ref{eq:initial}) on some time interval $[0,T)$, where $T$ depends only on $\gamma,B$, the model parameters and $\|V_0\|_{C^{2,\gamma}(\bar B)}$, $\|u_0\|_{C^{2,\gamma}(\Gamma)}$. 
\end{proposition}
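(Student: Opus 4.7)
\textbf{Proof plan for Proposition \ref{prop4}.} The approach is a Schauder fixed-point argument in parabolic Hölder spaces. Introduce the Banach space $X:=H^{1+\gamma,(1+\gamma)/2}(B_T)\times H^{1+\gamma,(1+\gamma)/2}(\Gamma_T)$ and, for suitable $R>0$ and $T>0$ to be chosen, the closed convex set
\begin{align*}
K\,:=\, \bigl\{(\tilde V,\tilde u)\in X\,:\,\tilde V(\cdot,0)=V_0,\ \tilde u(\cdot,0)=u_0,\ |\tilde V|^{(1+\gamma)}_{B_T}+|\tilde u|^{(1+\gamma)}_{\Gamma_T}\leq R\bigr\}.
\end{align*}
Given $(\tilde V,\tilde u)\in K$, I would define $\Phi(\tilde V,\tilde u)=(V,u)$ by decoupling the original system. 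First, solve the time-parametrised elliptic problem
\begin{align*}
\Delta c(\cdot,t)-\alpha c(\cdot,t)=0\text{ in }B,\qquad\nu\cdot\nabla c(\cdot,t)=\beta\tilde u(\cdot,t)\text{ on }\Gamma,
\end{align*}
which, since $\alpha>0$, is coercive and uniquely solvable; standard elliptic Schauder theory yields $c(\cdot,t)\in C^{2,\gamma}(\bar B)$ with norm bounded by $\|\tilde u(\cdot,t)\|_{C^{1,\gamma}(\Gamma)}$. Time-Hölder regularity of $c$ and $\nabla c$ is extracted by applying the same elliptic estimate to the equation solved by $c(\cdot,t)-c(\cdot,s)$, inheriting the Hölder modulus of $\tilde u$ in $t$.

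Next, using the identity $\Delta c=\alpha c$, define $V\in H^{2+\gamma,1+\gamma/2}(B_T)$ as the unique solution of the linear Robin problem
\begin{align*}
\partial_tV=D\Delta V-\nabla c\cdot\nabla V-\alpha c\,V\ \text{in }B_T,\quad D\nu\cdot\nabla V+(k_1-\beta\tilde u)V=k_2\tilde u\ \text{on }\Gamma_T,\quad V(\cdot,0)=V_0,
\end{align*}
and $u\in H^{2+\gamma,1+\gamma/2}(\Gamma_T)$ as the unique solution of
\begin{align*}
\partial_tu=d\Delta_\Gamma u-k_2u+k_1\tilde V|_\Gamma\ \text{on }\Gamma_T,\qquad u(\cdot,0)=u_0.
\end{align*}
Both are uniformly parabolic linear problems whose coefficients lie in $H^{\gamma,\gamma/2}$, so the Solonnikov--Ladyzhenskaya--Ural'tseva Schauder theory (see \cite{lady2}) yields unique solutions with estimates controlled by the model parameters, the data and the norm of $(\tilde V,\tilde u)$ in $K$. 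The stated compatibility condition is exactly what is needed for $H^{2+\gamma,1+\gamma/2}$-regularity up to $t=0$, since at $t=0$ the flux condition for $V$ reduces to $D\nu\cdot\nabla V_0+(k_1-\beta u_0)V_0=k_2u_0$, which rearranges to the stated identity.

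To apply Schauder's fixed-point theorem I need $\Phi(K)\subset K$, continuity of $\Phi\colon K\to X$, and relative compactness of $\Phi(K)$ in $X$. Compactness is immediate from the compact embedding $H^{2+\gamma,1+\gamma/2}\hookrightarrow\!\hookrightarrow H^{1+\gamma,(1+\gamma)/2}$ on $B_T$ and $\Gamma_T$. Continuity follows by applying the Schauder estimates to differences of solutions of the three subproblems, exploiting their linearity in the data. Self-mapping is the place where short time is used: the Schauder estimates bound $|\Phi(\tilde V,\tilde u)|^{(2+\gamma)}$ in terms of a constant $C(R)$ and the data, and the elementary inequality $|w-w(\cdot,0)|^{(1+\gamma)}_{B_T}\leq C\,T^{\gamma/2}|w|^{(2+\gamma)}_{B_T}$ (and its surface analogue) then forces $\Phi(\tilde V,\tilde u)$ to lie within the $R$-ball of the time-constant extension of $(V_0,u_0)$ once $T$ is small enough relative to $R$. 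Schauder's theorem thereupon yields a fixed point, which is a classical solution on $[0,T)$; uniqueness is already supplied by Proposition \ref{prop:unique}.

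The principal technical obstacle is the bookkeeping needed to make $\Phi$ self-mapping: the Robin coefficient $(k_1-\beta\tilde u)$ and the drift $\nabla c$ both inherit their norms from $\tilde u\in K$, so the Schauder constant for the $V$-subproblem depends on $R$, and the argument must first fix $R$ in terms of $(V_0,u_0)$ and only then choose $T$ small relative to $R$. A related subtlety is that the elliptic lift from $\tilde u$ to $c$ must be carried out in parabolic Hölder norms jointly in space and time, which requires care because the elliptic problem is only spatial; the time-Hölder seminorm of $c$ has to be handled via differences, as indicated above.
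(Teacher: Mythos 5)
Your proposal is correct and follows essentially the same route as the paper: a Schauder fixed-point argument decoupling the system into linear subproblems, with elliptic Schauder estimates plus time-differencing for $c$, parabolic Schauder estimates for $V$ and $u$, a short-time interpolation of the $(1+\gamma)$-norm against the $(2+\gamma)$-norm to obtain the self-map, compactness from $H^{2+\gamma,1+\frac{\gamma}{2}}\hookrightarrow\hookrightarrow H^{1+\gamma,\frac{1+\gamma}{2}}$, and uniqueness from Proposition \ref{prop:unique}. The only cosmetic difference is that the paper iterates on $V$ alone (chaining $V\mapsto u\mapsto c\mapsto V_{\mathrm{new}}$, with the quadratic boundary term $\beta u V_{\mathrm{new}}$ treated as a Robin coefficient), whereas you iterate on the pair $(\tilde V,\tilde u)$; this does not change the substance of the estimates.
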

\begin{proof}
We use an iteration scheme and Schauder's fixed point theorem. First fix an arbitrary $T>0$ and let
\begin{align}
	\mathcal B_K\,:=\, \{ V\in H^{2+\gamma,1+\frac{\gamma}{2}}(B_T)\,:\, V|_{t=0}=V_0,\, |V|_{B_T}^{1+\gamma}\leq K\}. \label{Vbound}
\end{align}
We start the iteration with an arbitrary $V\in\mathcal B_K$, and use this to generate $u:\Gamma_T\rightarrow\mathbb{R}$ via the linear equation
\begin{align}
\p_tu  =  d\Delta_{\Gamma} u + k_1V  - k_2u\quad\text{ in }\Gamma_T,\qquad u(\cdot,0) = u_0\quad\text{ on }\Gamma.  \label{u_k}
\end{align}
Here, solvability in $H^{2+\gamma,1+\frac{\gamma}{2}}(\Gamma_T)$ is guaranteed by Schauder theory \cite[Satz 2.3.24]{Lamm01}.\\
This, in turn, generates $c\in H^{2+\gamma,1+\frac{\gamma}{2}}(B_T)$ via
\begin{align}
\Delta c - \alpha c = 0\quad\mathrm{on}~B_T, \quad \nu\cdot\nabla c = \beta u\quad\mathrm{on}~\Gamma_T,\label{c_k}
\end{align}
(see \cite[Theorem III.3.2]{lady1} for an existence statement).

The iteration procedure is completed by solving 
\bq
	\frac{\p V_{\text{new}}}{\p t} & = & D\Delta V_{\text{new}} - \nabla\cdot(V_{\text{new}}\nabla c)\nonumber\\
  & = & D\Delta V_{\text{new}} - \nabla c\nabla V_{\text{new}} -\alpha c V_{\text{new}}\label{V_new}
\eq
on $B_T$, subject to the boundary condition
\begin{align}
\nu\cdot(D\nabla V_{\text{new}} - V_{\text{new}}\nabla c) = (k_1V_{\text{new}}  - k_2u)\quad\text{ on }\Gamma_T,\label{Vflux_new}
\end{align}
or, equivalently,
\begin{align}
\nu\cdot D\nabla V_{\text{new}} = (k_1V_{\text{new}}  - k_2u) + \beta u V_{\text{new}}\quad\text{ on }\Gamma_T,\label{alt_Vflux}
\end{align}
and the initial condition $V_{\text{new}}(\cdot,0)=V_0$, for the updated function $V_{\text{new}}: B_T\rightarrow\mathbb{R}$. 

Note that unique solvability in $H^{2+\gamma,\frac{2+\gamma}{2}}$ of the linear initial/boundary-value problem for $V_{\text{new}}$ is guaranteed by, for example, \cite[Theorem IV.5.3, p.320]{lady2}, and that we therefore have a well-defined mapping
\begin{align*}
	F:\mathcal B(K)\,\to\, H^{2+\gamma,\frac{2+\gamma}{2}}(B_T)\cap \{V|_{t=0}=V_0\},\quad V\mapsto V_{\text{new}}. 
\end{align*}
A fixed point of this mapping will give the solution we are looking for.

The aim is now to obtain H\"{o}lder estimates for $V_{\text{new}}$ that ensure $V_{\text{new}}\in\mathcal B(K)$, for $T>0$ sufficiently small and $K$ sufficiently large.  

First of all, since the boundary $\Gamma$ is a compact manifold, and since the Comparison Principle gives us control of $\|u\|_{L^\infty(\Gamma_T)}$ in terms of $\|V\|_{L^\infty(B_T)}$, we can stitch together interior-type Schauder estimates (\cite{lady2}, Theorem 10.1, p.351) to get
\begin{align}
	|u|^{(2+\gamma)}_{\Gamma_T}  \leq  C\left(|V|^{(\gamma)}_{B_T} + \|u_0\|_{C^{2,\gamma}(\Gamma)}\right) \leq  C\left(K + \|u_0\|_{C^{2,\gamma}(\Gamma)}\right). \label{ubound}
\end{align}

Moreover, we have the following estimates (\cite[Eq.~3.7, p.137]{lady1}, resp.~\cite[Eq.~7.37, p.349]{TayI})
\begin{align}
	\|c\|_{C^{2,\gamma}(\overline{B})}&\leq C\left(\|c\|_{C^0(\overline{B})} + \|u\|_{C^{1,\gamma}(\Gamma)}\right),\label{cgamma} \\
	\|c\|_{H^2(B)}&\leq C\left(\|c\|_{L^2(B)} + \|u\|_{H^{\frac{1}{2}}(\Gamma)}\right).\label{c_H2}
\end{align}

Now, by the argument of Proposition \ref{prop:unique},  we have $\|c\|_{H^1(B)}\leq\|u\|_{L^2(\Gamma)}$ (cf. (\ref{cdiff})), and so (\ref{c_H2}) implies
\begin{align}
\|c\|_{H^2(B)} \leq C\left(\|c\|_{L^2(B)} + \|u\|_{H^{\frac{1}{2}}(\Gamma)}\right)\leq C\|u\|_{H^{\frac{1}{2}}(\Gamma)},
\end{align}
and hence, by (\ref{cgamma}), with the aid of the Sobolev imbedding $H^2(B)\hookrightarrow C^0(\bar B)$, 
\begin{align}
\|c\|_{C^{2,\gamma}(\overline{B})}\leq C\left(\|u\|_{H^{\frac{1}{2}}(\Gamma)} + \|u\|_{C^{1,\gamma}(\Gamma)}\right)\leq C\|u\|_{C^{1,\gamma}(\Gamma)}.\label{c_k2}
\end{align}

Also, by using the homogeneity of (\ref{c_k}) and taking temporal differences (details given in Appendix \ref{c_k_time_est}), one sees that (\ref{c_k2}) implies
\begin{align}
	\max\left\{|c|^{(\gamma)}_{B_T},|\nabla c|^{(\gamma)}_{B_T}\right\}\leq C|u|^{(2+\gamma)}_{\Gamma_T},\label{cbound}
\end{align}
which by (\ref{ubound}) gives $H^{\gamma,\gamma/2}(B_T)$-control on the coefficients in the right-hand side of (\ref{V_new}). Furthermore, the coefficients and the inhomogeneity in the boundary condition \eqref{alt_Vflux} are by \eqref{ubound} controlled in $H^{1+\gamma,\frac{1+\gamma}{2}}(\Gamma_T)$. With this in hand, we obtain from (\ref{V_new}) and (\ref{alt_Vflux}) by \cite[Theorem 5.3, p.320]{lady2} the following parabolic Schauder estimate for $V_{\text{new}}$:
\begin{align}
	|V_{\text{new}}|^{(2+\gamma)}_{B_T}\leq C\big(\|V_0\|_{C^{2,\gamma}(\overline{B})} + |u|^{(1+\gamma)}_{\Gamma_T}\big),\label{Vschau}
\end{align}
where $C$ depends on $|c|^{(\gamma)}$, $|\nabla c|^{(\gamma)}$ and $|u|^{(1+\gamma)}$.

Combining $|V|_{B_T}^{1+\gamma}\leq K$ with (\ref{ubound}) and (\ref{cbound}) thus results in 
\begin{align}
|V_{\text{new}}|^{(2+\gamma)}\leq C(K,\|u_0\|_{C^{2,\gamma}(\Gamma)})\left(1+\|V_0\|_{C^{2,\gamma}(\overline{B})} \right).\label{Vbound2}
\end{align}
Moreover, by Proposition \ref{KA1_lem} and Remark \ref{KA1_rem} in the Appendix, we have
\begin{align}
|V_{\text{new}}|^{(1+\gamma)}\leq C\left(T^{\delta}|V_{\text{new}}|^{(2+\gamma)} + |V_0|^{(2+\gamma)}\right),\label{Vbound7}
\end{align}
for some $\delta>0$.

Hence,
\begin{align}
|V_{\text{new}}|^{(1+\gamma)}\leq C\left(T^{\delta}C(K,\|u_0\|_{C^{2,\gamma}(\Gamma)})\big(1+\|V_0\|_{C^{2,\gamma}(\overline{B})} \big) + \|V_0\|_{C^{2,\gamma}(\overline{B})}\right),\label{Vbound8}
\end{align}
which entails that if we choose $K$ sufficiently large relative to $\|V_0\|_{C^{2,\gamma}(\overline{B})}$, and $T$ sufficiently small relative to $K$ and $\|u_0\|_{C^{2,\gamma}(\Gamma)}$, then we have
\begin{align}
	|V_{\text{new}}|^{(1+\gamma)}_{B_T}\leq K,\label{Vbound3}
\end{align}
and consequently, using \eqref{Vbound2}, 
$F:\mathcal{B}(K)\rightarrow \mathcal{B}(K)$ (compactly).

Finally, in order to apply Schauder's fixed-point theorem, we need to show that the mapping $F:\mathcal{B}(K)\rightarrow \mathcal{B}(K)$ is continuous with respect to the $|\cdot|^{(1+\gamma)}$-norm. This is done by applying linear Schauder estimates to differences of (\ref{u_k}), (\ref{c_k}), (\ref{V_new}) and (\ref{Vflux_new}), for starting iterates $V$ and $\overline{V}$ (with the same initial data), which gives
\begin{align}
	|u - \bar{u}|^{(2+\gamma)}\leq C|V - \overline{V}|^{(\gamma)},
\end{align}
\begin{align}
	\max\left\{|c - \bar{c}|^{(\gamma)},|\nabla(c - \bar{c})|^{(\gamma)}\right\}\leq C|u - \bar{u}|^{(2+\gamma)},
\end{align}
and
\begin{align}
	|V_{\text{new}} - \overline{V}_{\text{new}}|^{(2 + \gamma)}\leq C(K)\left(|\nabla(c - \bar{c})|^{(\gamma)} + |u - \bar{u}|^{(1+\gamma)}\right),
\end{align}
as required.

Thus, Schauder's fixed-point theorem can be applied to give us the required short-time solution - uniqueness was already proved above.
\end{proof}

%
\section{Continuation of classical solutions with small data on bounded domains}\label{globex4}

Here we synthesise the results of Sections \ref{globex} and \ref{class} to obtain a global-in-time classical solution of (\ref{V})-(\ref{eq:initial}) on a bounded domain for small data. More precisely, we have the following theorem.
 
\begin{theorem}\label{thm2}
For bounded $B\subset\mathbb{R}^3$ and initial data $(V_0,u_0)$, all of class $C^{2,\gamma}$, $\gamma >0$, satisfying appropriate compatibility conditions (as in Proposition \ref{prop4}), there exists a unique, global classical solution $(V,u,c)$ of the system (\ref{V})-(\ref{eq:initial}), provided some $Q_p(0)$, $p\in(1,\infty)$, is chosen sufficiently small. 
\end{theorem}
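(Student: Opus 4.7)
The approach is to combine the local existence and uniqueness results of Propositions \ref{prop4} and \ref{prop:unique} with the a priori estimates of Proposition \ref{prop_bdd_dom_Q_p} and the continuation scheme sketched at the end of Section \ref{globex3}. Proposition \ref{prop4} yields a local classical solution on some maximal interval of existence $[0,T_{\max})$, and Proposition \ref{prop:unique} gives uniqueness; the task is therefore to rule out $T_{\max}<\infty$.

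Given smallness of $Q_p(0)$ for some $p>1$, Proposition \ref{prop_bdd_dom_Q_p} delivers a uniform-in-time bound on $Q_p(t)$ together with an arbitrarily small uniform bound on $\|u(t)\|_{L^2(\Gamma)}$ if $Q_p(0)$ is chosen small enough. To reach the $L^4$-level relevant for the blow-up criterion of Section \ref{globex3}, if $p\geq 4$ the inclusions $L^p(B)\hookrightarrow L^4(B)$ and $L^p(\Gamma)\hookrightarrow L^4(\Gamma)$ on the bounded domain immediately do the job. For $1<p<4$, I would instead re-run the dichotomy from the proof of Proposition \ref{prop_bdd_dom_Q_p} at the exponent $q=4$: once the $L^2(\Gamma)$-norm of $u$ is small enough that $D_4/2-C_4\|u\|_{L^2(\Gamma)}>0$, that argument shows that either $Q_4$ is non-increasing or else it is bounded by a modulus of continuity of the (small) total mass $M$. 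Combined with finiteness of $Q_4(0)$ (which follows from $V_0,u_0\in C^{2,\gamma}$), this yields a uniform bound on $Q_4(t)$.

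Granted $L^4$-control of $V$ and $u$, the chain of estimates at the end of Section \ref{globex3} converts this into Hölder regularity: Duhamel's formula \eqref{eq:40}, the trace inequality on $V^2$, and the heat-semigroup smoothing \eqref{heat} produce a uniform $W^{s,\infty}(\Gamma)$-bound on $u$ as in \eqref{eq:Wsinfty}, and Morrey's inequality upgrades this to a uniform $C^\alpha(\Gamma)$-bound on $u$ for some $\alpha>0$. Elliptic Schauder theory applied to \eqref{c}--\eqref{cflux} then transfers this into a uniform $C^{2,\alpha}(\bar B)$-bound on $c$, and the parabolic Schauder estimates employed in the proof of Proposition \ref{prop4} (cf.\ the bounds \eqref{Vschau}, \eqref{Vbound2}) in turn yield a uniform $C^{2,\beta}$-bound on $V$ for some $\beta>0$.

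With the relevant Hölder norms of $(V,u,c)$ uniformly bounded on $[0,T_{\max})$, the local existence time furnished by Proposition \ref{prop4} is bounded below uniformly in the starting time, so iterating the local result forces $T_{\max}=\infty$. The main obstacle I expect is the bootstrap step in the second paragraph: the threshold on $Q_p(0)$ needed to make the dichotomy close at $q=4$ must be chosen carefully in terms of the $C^{2,\gamma}$ norms of the initial data (which control $Q_4(0)$) and of the model parameters; everything else reduces to a careful but essentially routine assembly of Duhamel and Schauder estimates.
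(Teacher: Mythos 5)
Your overall strategy (local existence and uniqueness from Propositions \ref{prop4} and \ref{prop:unique}, a priori control from Proposition \ref{prop_bdd_dom_Q_p}, regularity bootstrap, continuation) matches the paper's, but your a priori stage takes a different route. You bootstrap to $L^4$-control by re-running the dichotomy of Proposition \ref{prop_bdd_dom_Q_p} at exponent $4$ (uniformly small $\|u\|_{L^2(\Gamma)}$, finite but possibly large $Q_4(0)$) and then invoke the criterion of Section \ref{globex3}. The paper's proof of Theorem \ref{thm2} deliberately avoids the $L^4$ level: it uses only the $Q_2$-bound together with the time-integrated dissipation $\int_0^T\big(\|\nabla V\|_{L^2(B)}^2+\|\nabla u\|_{L^2(\Gamma)}^2\big)$, feeds $\|V\|_{L^4(\Gamma)}\in L^2(0,T)$ into Duhamel \eqref{eq:40} with \eqref{heat} at $s=1-\tfrac1p$, $q=4$, obtains $u\in L^\infty(0,T;W^{1-\frac1p,p}(\Gamma))$ for $p<6$, and then $c\in C^{1+\sigma}(\bar B)$ by elliptic $W^{2,p}$-regularity. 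Your detour is legitimate — the paper itself asserts in Section \ref{globex3} that $L^4$-control suffices, and the dichotomy does close at exponent $4$ once $\|u\|_{L^2(\Gamma)}$ is small, giving $Q_4(t)\leq\max\{Q_4(0),\rho_4(M)\}$ — but note you must actually redo that argument rather than cite Proposition \ref{prop_bdd_dom_Q_p}, whose hypothesis is smallness of $Q_4(0)$ itself; you acknowledge this.

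The genuine gap is in your final regularity step. The bound \eqref{eq:Wsinfty} plus Morrey gives uniform-in-time \emph{spatial} regularity of $u(\cdot,t)$ (in fact $C^{1,\alpha}(\Gamma)$ for $\alpha<\tfrac14$, which is what you need — not merely $C^{\alpha}$ — before \eqref{cgamma} can yield $c(\cdot,t)\in C^{2,\alpha}(\bar B)$), but it says nothing about Hölder continuity in time. The parabolic Schauder estimate \eqref{Vschau} you then want to apply to $V$ requires the coefficients $c$, $\nabla c$ in the parabolic class $H^{\gamma,\gamma/2}(B_T)$ and the Robin datum $u$ in $H^{1+\gamma,\frac{1+\gamma}{2}}(\Gamma_T)$, i.e.\ space-time Hölder regularity, and uniform spatial bounds alone do not deliver this. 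Producing the time regularity is exactly where the paper spends the second half of its proof: comparison principle for $\|V\|_{L^\infty}$, parabolic regularity giving $u\in\mathrm{Lip}([0,T),C^0(\Gamma))\cap L^\infty([0,T),C^{1+\sigma}(\Gamma))$, Weidemaier's maximal $W^{2,1}_p$-regularity for $V$, Ladyzhenskaya's embedding into $H^{\kappa,\kappa/2}(B_T)$, Schauder for $u$ on $\Gamma_T$, and the temporal-difference estimates of Proposition \ref{pro:14} for $c$ and $\nabla c$, before parabolic Schauder finally gives $V\in H^{2+\kappa,1+\frac{\kappa}{2}}(B_T)$ and hence the uniform $C^{2,\kappa}$ control needed to restart Proposition \ref{prop4}. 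As written, your ``routine assembly of Duhamel and Schauder estimates'' skips precisely this bridge, and it is not routine; without it the claimed uniform $C^{2,\beta}$-bound on $V$, and thus the lower bound on the continuation time, is unjustified.
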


\begin{proof}

Suppose we have a classical solution $(V,u,c)$ on some maximal time interval of existence $[0,T)$, where the existence of such a $T>0$ is guaranteed by Proposition \ref{prop4}.

From Proposition \ref{prop_bdd_dom_Q_p}, we can make $\|u\|_{L^2(\Gamma)}(t)$ arbitrarily small, {\em a priori}, by choosing any $Q_p(0)$ sufficiently small. Thus, in (\ref{eq:alt_Qp_est}), we can set $p=2$ and make $\|u\|_{L^2(\Gamma)}(t)\leq\frac{D_p}{2C_p}$, such that
\begin{align*}	
	\sup_{0<t<T} Q_2(t) + C\left(\int_0^T\|\nabla V\|_{L^2(B)}^2 + \|\nabla u\|_{L^2(\Gamma)}^2\right)\,\leq\, Q_2(0) + C M^3T \,\leq\, \Lambda.
\end{align*}
In the following, we will only need this latter bound.

First we test \eqref{c}, \eqref{cflux} with $c^{p-1}$, and deduce by Gagliardo-Sobolev embedding and interpolation inequalities that
\begin{align*}
	c_p\int_B |\nabla c^{\frac{p}{2}}|^2 + \alpha c^p \,=\, \int_\Gamma \beta u c^{p-1} \,\leq\, \beta\|c^{\frac{p}{2}}\|_{L^4(\Gamma)}^{\frac{2(p-1)}{p}} \|u\|_{L^{\frac{2p}{p+1}}(\Gamma)}\,\leq\, \beta C\|c^{\frac{p}{2}}\|_{H^1(B)}^{\frac{2(p-1)}{p}} \|u\|_{L^2(\Gamma)}^\theta\|u\|_{L^1(\Gamma)}^{1-\theta}
\end{align*}
for some $\theta=\theta(p)\in (0,1)$. This in particular implies
\begin{align}
	\|c^{\frac{p}{2}}\|_{H^1(B)} \,\leq\, C(p,\alpha,B)\beta^{\frac{p}{2}}\|u\|_{L^2(\Gamma)}^{\frac{\theta p}{2}}M^{\frac{(1-\theta)p}{2}} \,\leq\, C(p,\alpha,B)\beta^{\frac{p}{2}} \Lambda^{\frac{p}{4}}. \label{eq:thm7-bdc}
\end{align}
Next Duhamel's principle \eqref{eq:40} and (\ref{heat}) with $s=1-\frac{1}{p}$, $q=4$, $r=0$ yield
\begin{align}
	\|e^{k_2t}u(t)\|_{W^{1-\frac{1}{p},p}(\Gamma)} \leq C + Ce^{k_2t}\int_0^t(t-s)^{-\left(\frac{1}{4}-\frac{1}{p}\right)-\frac{1}{2}\left(1-\frac{1}{p}\right)}\|V\|_{L^4(\Gamma)}(s)~ds. \label{duhamel}
\end{align}
Next, by the trace inequality \cite{Biez}
\begin{align*}
	\|V\|_{L^4(\Gamma)}\leq C\left(\|\nabla V\|_{L^2(B)} + \|V\|_{L^2(B)}\right)
\end{align*}
we deduce
\begin{align}
	\int_0^t\|V\|_{L^4(\Gamma)}^2(s)~ds\leq C\int_0^t\|\nabla V\|_{L^2(B)}^2(s) + \|V\|_{L^2(B)}^2(s)~ds\leq C(\Lambda,T).
\end{align}
Thus, by Young's inequality applied to the integrand on the right-hand side of \eqref{duhamel}, we deduce that $u(t)$ stays in $W^{1-\frac{1}{p},p}(\Gamma)$ for $p<6$ and $t<T$, with
\begin{align*}
	\|u\|_{L^\infty(0,T;W^{1-\frac{1}{p},p}(\Gamma))}\,\leq\, C_p(T,\Lambda)\quad\text{ for any }1\leq p<6.
\end{align*}

By elliptic regularity (\cite{Pruess}, p.57) and \eqref{eq:thm7-bdc} we then obtain 
\begin{align*}
	\|c\|_{W^{2,p}(B)}\leq C_p\Big(\|u\|_{W^{1-\frac{1}{p},p}(\Gamma)} + \|c\|_{L^p(B)}\Big) \,\leq\, C_p\Big(\|u\|_{W^{1-\frac{1}{p},p}(\Gamma)} + \Lambda\Big)\quad\text{ for any }1\leq p<6, 
\end{align*}
and hence taking $p\in (3,6)$, we get by Morrey's inequality that $c\in C^{1+\sigma}(\bar B)$ for all $0<\sigma<\frac{1}{2}$ and that
\begin{align*}
	\|c\|_{L^\infty(0,T;C^{1+\sigma}(\bar B))}\leq C_\sigma\Big(\|u\|_{L^\infty(0,T;W^{1-\frac{1}{p},p}(\Gamma))}+\Lambda\Big)\,\leq\, C_\sigma(T,\Lambda)\quad\text{ for any }0<\sigma<\frac{1}{2}. 
\end{align*}
Thus, all the coefficients in (\ref{V}) and (\ref{Vflux}) are bounded, which entails that $V$ is uniformly bounded on $[0,T)$, by the comparison principle  \cite[Thm 2.3, p.17]{lady2}, and hence, by parabolic regularity \cite[Eq. 1.13, p.316]{TayIII}, 
\begin{align*}
	\|u\|_{\text{Lip}([0,T),C^0(\Gamma))\cap L^{\infty}([0,T),C^{1+\sigma}(\Gamma))}\,\leq\, C_\sigma(T,\Lambda)\quad\text{ for any }0\leq\sigma<1.
\end{align*}
By \cite[Thm 3.2]{Weid02}, this is enough to guarantee 
\begin{align*}
	\|V\|_{W^{2,1}_p(B_T)}\,\leq\, C_p(T,\Lambda)\quad\text{ for all }p\in[1,\infty).
\end{align*}
Hence, by an embedding theorem of Ladyzhenskaya \cite[Lemma II.3.3]{lady2}
\begin{align*}
	\|V\|_{H^{\kappa,\frac{\kappa}{2}}(B_T)}\,\leq\, C_\kappa(T,\Lambda)\quad\text{ for any } 0\leq\kappa<1.
\end{align*}
Thus, by Schauder theory on compact manifolds, 
\begin{align*}
	\|u\|_{H^{2+\kappa,1+\frac{\kappa}{2}}(\Gamma_T)}\,\leq\, C_\kappa(T,\Lambda)\quad\text{ for any } 0\leq\kappa<1,
\end{align*}
and by elliptic Schauder theory
\begin{align*}
	\sup_{t\in (0,T)}\|c\|_{C^{2,\kappa}(\bar B)}(t)\,\leq\, C_\kappa(T,\Lambda)\quad\text{ for any } 0\leq\kappa<1.
\end{align*}
As in the proof of \eqref{cbound} in Proposition \ref{prop4}, we also get control of Hölder seminorms of $c$ by taking temporal differences,
\begin{align*}
	\max\left\{|c|^{(\kappa)}_{B_T},|\nabla c|^{(\kappa)}_{B_T}\right\}\leq C_\kappa(T,\Lambda),\end{align*}
and finally by parabolic Schauder estimates
\begin{align*}
	\|V\|_{H^{2+\kappa,1+\frac{\kappa}{2}}(B_T)} \,\leq\, C_\kappa(T,\Lambda).
\end{align*}
Thus, the local solution guaranteed by Proposition \ref{prop4} can always be continued onto a longer time interval, since a lower bound on the existence time for local-in-time solutions is determined by any $C^{2,\gamma}$-norm of the initial data.
\end{proof}

%
\section{Global \texorpdfstring{$L^p$}{Lp}-control for arbitrary data in two different regularised models}\label{reg}

We now force global existence of classical solutions for arbitrary data by regularising our model in two different ways. Note in advance that both of our regularised models will preserve positivity of solutions, by the same argument as in Proposition \ref{pos1}.

\subsection{Truncating the flux at the boundary}\label{truncflux}
One way of preventing blow-up of $V$ and $u$ is to regularise by truncating the boundary flux, $q$. Thus, we replace $q=k_1V - k_2u$ with $q_m(k_1V - k_2u)$, taken to be monotonically increasing on physical grounds, where $m\in\R$,
\begin{align}
	q_m\in C^2(\R,\R),\quad q_m(0)=0,\quad -m\leq q_m(\cdot)\leq m. \label{eq:ass-qm}
\end{align}
We therefore immediately have boundedness of the exchange term between bulk and surface. The price we pay, however, is that the boundary condition \eqref{Vflux} and the surface evolution equation \eqref{u} are now nonlinear. Thus, instead of (\ref{u}) we now have
\begin{align}
	\p_tu  =  d\Delta_{\Gamma} u + q_m(k_1 V-k_2 u),\label{u_qm}
\end{align}
and instead of (\ref{Vflux}) we have
\begin{align}
	- \nu\cdot(D\nabla V - V\nabla c) = q_m(k_1V  - k_2u).\label{modVflux}
\end{align}

Since $q_m$ is bounded, we have, by comparison, $u(t)\leq C + mt$, for $t\in[0,T)$. Also note that the boundedness of the flux, $q_m$, implies by parabolic regularity \cite[Eq. 1.13, p.316]{TayIII}, as in the proof of Theorem \ref{thm2},
\begin{align*}
	u\in \text{Lip}([0,T),C(\Gamma))\cap L^{\infty}([0,T),C^{1+\kappa}(\Gamma)),\quad\text{ for all }0\leq\kappa<1,
\end{align*} 
and hence
\begin{align}
	c\in L^{\infty}([0,T),C^{2+\kappa}(\bar B)),\quad\text{ for all }0\leq\kappa<1.  \label{eq:cC0kappa}
\end{align} 

The bound \eqref{eq:cC0kappa} and the comparison principle \cite[Thm 2.3, p.17]{lady2} imply that $V$ is uniformly bounded on $[0,T)$. 

By our assumptions \eqref{eq:ass-qm} on $q_m$, and since $V$ is {\em a priori} bounded, it follows from our control on $u$ and \cite[Theorem V.7.1, p.478, and Theorem II.8.1]{lady2}, that some $|V|^{\kappa}_{B_T}$ is also bounded.

Next, using parabolic regularity theory for (\ref{u_qm}) (for example, by covering $\Gamma$ with coordinate patches, pulling the localised equations back to a subset of $\R^2\times(0,T)$ and then applying interior estimates), together with the bound on $|V|^{\delta}_{B_T}$, we get $u\in H^{2+\delta,1+\frac{\delta}{2}}(\Gamma_T)$ for some $\delta>0$. With the aid of Proposition \ref{pro:14}, we also deduce that $c\in H^{1+\delta,\frac{1+\delta}{2}}(\Gamma_T)$.

Thus far we have obtained a-priori bounds
\begin{align}
	|V|^{\delta}_{B_T} + |u|^{2+\delta}(\Gamma_T) + |c|^{1+\delta}(B_T) + \|\partial_t c\|_ {L^{\infty}(0,T;C^{\delta}(\bar B))}\,\leq\, \Lambda \label{eq:apriori-6}
\end{align}
for some $\delta>0$. We next apply \cite[Theorem IV.5.3]{lady2} to \eqref{V}, \eqref{modVflux} and use Ehrling's lemma to deduce
\begin{align*}
	|V|^{2+\delta}_{B_T} \,\leq\, C(\Lambda)(|V|^{1+\delta}(B_T)+1) \,\leq\, C(\Lambda)(\eps|V|^{2+\delta}(B_T)+C_\eps \Lambda +1).
\end{align*}
Choosing $\eps=\eps(\Lambda)$ sufficiently small gives an a-priori bound for $V$ in $ H^{2+\delta,1+\frac{\delta}{2}}(B_T)$.

To complete this discussion, we claim that the proof of Proposition \ref{prop4} goes through with only slight modifications when $q$ is replaced by a $q_m$ satisfying our assumptions.

Thus, we once again consider for some $T,K>0$ the set
\begin{align*}
	\mathcal B_K\,:=\, \{ V\in H^{2+\gamma,1+\frac{\gamma}{2}}(B_T)\,:\, V|_{t=0}=V_0,\, |V|_{B_T}^{1+\gamma}\leq K\}
\end{align*}
and start our iteration with an arbitrary $V\in\mathcal B_K$. We then solve the initial-value problem
\begin{align}
	\p_tu  =  d\Delta_{\Gamma} u + q_m(k_1V  - k_2u),\quad u(\cdot,0) = u_0  \label{u_k-7}
\end{align}
(\cite[Satz 2.4.5]{Lamm01} implies the existence of a solution $u\in H^{2+\sigma,1+\frac{\sigma}{2}}(\Gamma_{\delta})$ for $0<\sigma<\gamma$ on some maximal interval $[0,\delta)$, $\delta\leq T$, of existence).

As above, we deduce from the boundedness of $q_m$, the regularity of $V$, and Schauder estimates that $u\in H^{2+\gamma,1+\frac{\gamma}{2}}(\Gamma_\delta)$ is bounded a-priori, hence $\delta=T$. This, by  \cite[Theorem III.3.2]{lady1}, generates $c\in H^{2+\gamma,1+\frac{\gamma}{2}}(B_T)$ via
\begin{align*}
	\Delta c - \alpha c = 0\quad\mathrm{on}~B_T, \quad \nu\cdot\nabla c = \beta u\quad\mathrm{on}~\Gamma_T,
\end{align*}
The iteration procedure is completed by solving 
\bq
	\p_t V_{\text{new}} & = & D\Delta V_{\text{new}} - \nabla c\cdot \nabla V_{\text{new}} -\alpha c V_{\text{new}}\quad\text{ on }B\times[0,\infty),\\
	\nu\cdot D\nabla V_{\text{new}} &= &	q_m(k_1V  - k_2u) + \beta u V_{\text{new}} \quad\text{ on }\Gamma,
\eq
where existence of $V_{\text{new}}\in H^{2+\gamma,1+\frac{\gamma}{2}}(B_T)$ is ensured by \cite[Theorem IV.5.3, p.320]{lady2}.

In order to deduce that the map $F$, $V\mapsto V_{\text{new}}$ maps $\mathcal B$ into itself, we redo the estimates from Proposition \ref{prop4}: in the right-hand side of (\ref{ubound}) we have to add $|u|^{(\gamma)}_{\Gamma_T}$, since the flux is now a nonlinear function of $k_1V - k_2u$, but this can be absorbed into the left-hand side, by Proposition \ref{KA1_lem}. Also, the right-hand side of (\ref{Vschau}) picks up an extra term $|V|_{B_T}^{(1+\gamma)}$, and the rest of the proof follows as before.

Summarising, we have thus proved the following.

\begin{theorem}\label{thm1}
Fix $m\in\R$, and assume that $q_m$ satisfies \eqref{eq:ass-qm}. Assume that  $B$ is bounded, that $B$ and the initial data $(V_0,u_0)$ are of class $C^{2,\gamma}$ for some $\gamma >0$, and that the compatibility condition
\begin{align*}
	-\nu \cdot D\nabla V_0 \,=\, -\beta u_0V_0 + q_m(k_1V_0 -k_2u_0)\quad\text{ on }\Gamma
\end{align*}
holds. Then there exists a unique, global classical solution $(V,u,c)$ of the modified model (\ref{V}), (\ref{c}), (\ref{cflux}), (\ref{u_qm}) and (\ref{modVflux}). 
\end{theorem}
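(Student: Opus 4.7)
The plan is to establish short-time existence of classical solutions by a fixed-point argument analogous to Proposition \ref{prop4}, and then to show that the \emph{a priori} bounds available in the truncated setting are strong enough to rule out blow-up at any finite time, so that the local solution can be continued indefinitely.

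First I would set up an iteration on a ball $\mathcal B_K$ as in \eqref{Vbound}, mimicking the scheme from Proposition \ref{prop4} but using $q_m$ in place of $q$ in \eqref{u_k} and in the boundary condition for $V_{\mathrm{new}}$. Given $V \in \mathcal B_K$, the $u$-equation \eqref{u_k-7} now has a \emph{nonlinear}, but uniformly Lipschitz and uniformly bounded right-hand side; local solvability in $H^{2+\sigma,1+\sigma/2}(\Gamma_\delta)$ follows from \cite[Satz 2.4.5]{Lamm01}, and the boundedness of $q_m$ together with comparison and interior Schauder estimates promotes the local solution to a solution on the whole of $\Gamma_T$ with a bound on $|u|^{(2+\gamma)}_{\Gamma_T}$ that depends only on $K$ and the data. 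The $c$-step and the $V_{\mathrm{new}}$-step are essentially unchanged; the only new feature is that the boundary data in \eqref{alt_Vflux} now contains $q_m(k_1V-k_2u)$ instead of $k_1V_{\mathrm{new}}-k_2u$, so in the analogue of \eqref{ubound} one picks up an additional $|u|^{(\gamma)}_{\Gamma_T}$, and in the analogue of \eqref{Vschau} an extra $|V|^{(1+\gamma)}_{B_T}$; both are harmless and can be absorbed using Proposition \ref{KA1_lem} and a smallness of $T$ as before. This gives $F:\mathcal B_K\to\mathcal B_K$ compactly and continuously, and Schauder's fixed-point theorem yields a classical solution on some $[0,T_\ast)$. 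Uniqueness is proved exactly as in Proposition \ref{prop:unique}, the only change being that the linear coupling through $q$ is replaced by a Lipschitz coupling through $q_m$, which is immediately absorbed by Gronwall.

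Next I would derive \emph{a priori} bounds up to any prescribed existence time $T$, so that the local solution can be extended. The key input is the \textbf{uniform bound} $|q_m|\le m$. From this and the comparison principle for \eqref{u_qm}, $u$ is bounded by $\|u_0\|_\infty + mT$; applying parabolic regularity as in \cite[Eq.~1.13, p.316]{TayIII} upgrades this to $u\in \mathrm{Lip}([0,T),C(\Gamma))\cap L^\infty([0,T),C^{1+\kappa}(\Gamma))$ for every $\kappa<1$. Elliptic Schauder estimates for \eqref{c}--\eqref{cflux} then give \eqref{eq:cC0kappa}, which in particular provides bounded drift in \eqref{V}. The comparison principle \cite[Thm~2.3, p.17]{lady2} now yields $L^\infty$-control of $V$, and \cite[Theorems V.7.1 and II.8.1]{lady2} promote this to a Hölder bound $|V|^{(\delta)}_{B_T}\le\Lambda$ for some $\delta>0$.

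From the Hölder bound on $V$ I would then bootstrap: parabolic Schauder theory applied to \eqref{u_qm} (covering $\Gamma$ by patches and pulling back to $\R^2\times (0,T)$) gives $u\in H^{2+\delta,1+\delta/2}(\Gamma_T)$, and the time-difference argument behind \eqref{cbound} together with Proposition \ref{pro:14} gives $c\in H^{1+\delta,(1+\delta)/2}(B_T)$ and a Hölder estimate on $\partial_t c$, assembling the bound \eqref{eq:apriori-6}. Finally, applying the Schauder estimate \cite[Theorem IV.5.3]{lady2} to \eqref{V} with the nonlinear boundary condition \eqref{modVflux} gives, after absorbing the intermediate $|V|^{(1+\delta)}_{B_T}$ via Ehrling's lemma, an \emph{a priori} bound for $V$ in $H^{2+\delta,1+\delta/2}(B_T)$, on every compact time interval.

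The continuation step is then routine: because the short-time existence interval in the fixed-point step depends only on the $C^{2,\gamma}$-norms of the initial data (and the model parameters), and the bounds above control all such norms uniformly up to the maximal existence time, a maximal-time solution cannot terminate in finite time, which gives global existence. I expect the main technical obstacle to be handling the nonlinear boundary condition \eqref{modVflux} in the fixed-point map and in the bootstrap: one must check that every estimate from Proposition \ref{prop4} that used the linear structure of $q$ still closes when $q$ is replaced by $q_m$, which is where the uniform bound on $q_m$ and its derivatives (via \eqref{eq:ass-qm}) enters crucially.
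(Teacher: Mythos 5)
Your proposal is correct and follows essentially the same route as the paper: a Schauder fixed-point scheme modelled on Proposition \ref{prop4} (with the extra $|u|^{(\gamma)}_{\Gamma_T}$ and $|V|^{(1+\gamma)}_{B_T}$ terms absorbed via Proposition \ref{KA1_lem}), combined with a priori bounds flowing from $|q_m|\le m$ through comparison, parabolic and elliptic regularity, the Schauder estimate \cite[Theorem IV.5.3]{lady2} and Ehrling's lemma, and then continuation. Your explicit remark on uniqueness via the Proposition \ref{prop:unique} argument with the (locally) Lipschitz coupling $q_m$ is a point the paper leaves implicit, but it is consistent with its approach.
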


\subsection{Truncating the boundary source for  \texorpdfstring{$c$}{c}}

Another reasonable modification of the model is to truncate the boundary flux term for $c$ for large values of $u$. We therefore  consider a function
\begin{align}
	z\in C^3(\R),\quad\text{ bounded and monotonically increasing, with } z(0)=0~\text{and}~|z|\leq z_{\text{max}}, \label{eq:cond-z}
\end{align}
for some $z_{\text{max}}>0$. We then replace the Neumann boundary condition \eqref{cflux} by
\begin{align}
	\nu\cdot\nabla c = z(u)\quad\text{ on }\Gamma. \label{modcflux}
\end{align} 
In this case, we have, for the unfavourable term in the $L^p$-estimate \eqref{nonlinear}
\begin{align*}
	\int_{\Gamma}V^p(\nu\cdot\nabla c)\leq z_{\mathrm{max}}\int_{\Gamma} V^p,
\end{align*}
and therefore we get $L^p$ control of $u$ and $V$ (for arbitrary data and $p\in(1,\infty)$) with the aid of the trace inequality \cite{lady1}
\begin{align}
	\int_{\Gamma}V^p\leq C_{\eps}\int_B V^p + \eps\int_B|\nabla V^{\frac{p}{2}}|^2,\label{trace2}
\end{align}
and
\begin{align*}
	\frac{d}{dt}\left(\int_{\Gamma} c_1 u^p  + \int_B V^p\right)  \leq & -\frac{4(p-1)}{p}D\int_B |\nabla V^{\frac{p}{2}}|^2 - 	\frac{4(p-1)}{p}d\int_{\Gamma}|\nabla u^{\frac{p}{2}}|^2\nonumber\\
   & -  \int_{\Gamma}\frac{p}{(k_1)^{p-1}}(k_1V - k_2u)((k_1V)^{p-1}-(k_2u)^{p-1})\nonumber\\
   & +  z_{\mathrm{max}}\beta (p-1)\int_{\Gamma}V^p, 
\end{align*}
which is the analogue of (\ref{est1}).

Finally, by arguing almost exactly as in Sections \ref{globex3}, \ref{globex4} and \ref{truncflux}, we see that local smooth solutions of the second modified model can always be continued onto a longer time interval. Thus, $L^4$-control implies by Duhamel's principle that $u$ (and hence $z(u)$) stays in $C^{1,\eps}(\Gamma)$ for $\eps<\frac{1}{4}$, cf.~\eqref{eq:Wsinfty}. Thus, $c\in C^{2,\eps}(\bar B)$ (cf.~\eqref{c_k2}), $V$ remains bounded by the comparison principle, and, as before, we eventually see that $u$ and $V$ stay in $H^{2+\gamma,1+\frac{\gamma}{2}}$. The local-in-time theory goes through almost unchanged, since a brief calculation gives
\bq
|z(u)|^{1+\gamma}\leq C|u|^{1+\gamma}\left(1 + |u|^{1+\gamma}\right),\quad |z(u)|^{2+\gamma}\leq C|u|^{2+\gamma}\left(1  + (|u|^{2+\gamma})^2\right),
\eq
and we therefore have the following result.

\begin{theorem}\label{thm3}
Let $z$  as in \eqref{eq:cond-z} be given. Assume that  $B$ is bounded, that $B$ and the initial data $(V_0,u_0)$ are of class $C^{2,\gamma}$ for some $\gamma >0$, and that the compatibility condition
\begin{align*}
	-\nu \cdot D\nabla V_0 \,=\, -\beta u_0V_0 + k_1V_0 -k_2u_0\quad\text{ on }\Gamma
\end{align*}
holds. Then there exists a unique, global classical solution $(V,u,c)$ of the modified model (\ref{V}), (\ref{c}), (\ref{Vflux}), (\ref{u}) and (\ref{modcflux}). 
\end{theorem}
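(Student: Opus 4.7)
The plan mirrors the three-step architecture of Theorems \ref{thm2} and \ref{thm1}: local existence via Schauder's fixed-point theorem, followed by an unconditional $L^p$-estimate for arbitrary data, and finally a parabolic bootstrap that extends local solutions to all times. I would begin by adapting the iteration of Proposition \ref{prop4} to the modified boundary flux $\nu\cdot\nabla c = z(u)$. Since $z\in C^3$ with $z(0)=0$, the composition estimates
\begin{align*}
|z(u)|^{(1+\gamma)}_{\Gamma_T} &\leq C|u|^{(1+\gamma)}_{\Gamma_T}\big(1+|u|^{(1+\gamma)}_{\Gamma_T}\big),\\
|z(u)|^{(2+\gamma)}_{\Gamma_T} &\leq C|u|^{(2+\gamma)}_{\Gamma_T}\big(1+\big(|u|^{(2+\gamma)}_{\Gamma_T}\big)^2\big)
\end{align*}
quoted in the paragraph preceding the theorem, combined with (\ref{cgamma}), preserve the Hölder bound $|c|^{(\gamma)}_{B_T}+|\nabla c|^{(\gamma)}_{B_T}\leq C(K,\|u_0\|_{C^{2,\gamma}(\Gamma)})$ of (\ref{cbound}). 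The remainder of Proposition \ref{prop4} --- the Schauder estimate on $V_{\text{new}}$, the contraction factor $T^\delta$ entering (\ref{Vbound7}), and continuity of $V\mapsto V_{\text{new}}$ in the $|\cdot|^{(1+\gamma)}$-topology --- then carries over after replacing $\beta u$ by $z(u)$ throughout, producing a unique classical solution on a maximal interval $[0,T_{\max})$. Uniqueness follows by repeating Proposition \ref{prop:unique}, with the Lipschitz constant of $z$ entering the analogue of (\ref{cdiff}) but not affecting the final Gronwall step.

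Second, I would prove unconditional $L^p$-control for every $p\in(1,\infty)$ by reworking (\ref{est1}). As observed in the text, the boundary term in (\ref{nonlinear}) now satisfies $(p-1)\int_\Gamma V^p(\nu\cdot\nabla c)\leq z_{\max}(p-1)\int_\Gamma V^p$, so after discarding the nonpositive exchange term and invoking the trace inequality (\ref{trace2}) with $\eps$ small enough to absorb the surface term, one obtains
\begin{align*}
\frac{d}{dt}\Big(\int_\Gamma c_1 u^p+\int_B V^p\Big) \leq -\tfrac{D_p}{2}\int_B|\nabla V^{p/2}|^2 - d_p\int_\Gamma|\nabla u^{p/2}|^2 + C_{p,z_{\max}}\int_B V^p.
\end{align*}
Combined with mass conservation (\ref{eq:mass}) and Gronwall, this yields an a-priori (possibly exponentially growing) bound on $Q_p(t)$ valid for arbitrary initial data. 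Note that, in sharp contrast to the original system, the bad term is now linear in $V^p$ and decoupled from $u$, which is precisely what makes unconditional control possible.

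Third, with $L^4$-control secured, the continuation argument from the closing part of the proof of Theorem \ref{thm2} applies: the unchanged surface equation (\ref{u}), the Duhamel formula (\ref{eq:40}) and the heat-semigroup estimate (\ref{heat}) give $u\in L^\infty_t C^{1,\eps}(\Gamma)$ for $\eps<1/4$; since $z\in C^3$, $z(u)$ inherits this regularity, so the elliptic Schauder estimate (\ref{c_k2}) promotes $c$ to $L^\infty_t C^{2,\eps}(\bar B)$; the comparison principle then bounds $V$ in $L^\infty$, and the remaining steps of Theorem \ref{thm2} place $(V,u)$ in $H^{2+\gamma,1+\gamma/2}$ on every compact time interval. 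Since the local existence time in the first step depends only on the $C^{2,\gamma}$-norms of the data, this rules out finite-time blow-up and gives $T_{\max}=\infty$.

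The main obstacle I anticipate is bookkeeping rather than analysis: one must propagate the nonlinear composition $z(u)$ through the nested Hölder estimates appearing both in the local fixed-point argument and in the bootstrap, and verify that the polynomial growth in $|u|^{(2+\gamma)}$ exhibited by the composition bounds above is absorbed either by the $T^\delta$ factor of (\ref{Vbound7}) in the local theory or by the a-priori $L^p$-bounds of step two in the continuation. Thanks to the boundedness and smoothness of $z$, each individual estimate is otherwise routine.
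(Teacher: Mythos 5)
Your proposal is correct and follows essentially the same route as the paper: the trace inequality \eqref{trace2} absorbs the $z_{\max}$-bounded boundary term to give unconditional (at most exponentially growing) $L^p$-bounds, the continuation then proceeds exactly as in Theorem \ref{thm2} via Duhamel, $c\in C^{2,\eps}(\bar B)$ and the comparison principle, and the local fixed-point argument of Proposition \ref{prop4} is adapted using the same composition estimates for $z(u)$.
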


%

\section{Steady-state analysis}

In this section we look for steady states of our system, that is to say, solutions of
\begin{eqnarray}
	0 & = & D\Delta V - \nabla\cdot(V\nabla c),\label{eq:stat-V}\\
	0  & = & \Delta c - \alpha c\label{eq:stat-c}
\end{eqnarray}
in $B$, subject to the flux conditions
\begin{align}
	- \nu\cdot(D\nabla V - V\nabla c) =  k_1V  - k_2u,\label{eq:stat-V-bdry}
\end{align}
and
\begin{align}
	\nu\cdot\nabla c = \beta u\label{eq:stat-c-bdry}
\end{align} 
on $\Gamma$, such that $u$ satisfies
\begin{align}
	0  =  d\Delta_{\Gamma} u + k_1V  - k_2u,\label{eq:stat-u},
\end{align}
on $\Gamma$.


\begin{theorem}\label{arb_steady}
Let $B\subset\mathbb{R}^3$ be bounded and of class $C^{2,\gamma}$, $0<\gamma<\frac{1}{2}$. Then, if $\mu>0$ is small enough (only depending on the parameters), there exists a steady-state solution $(V,c,u)$ of (\ref{V})-(\ref{u}) such that $V,c\in C^{2,\gamma}(\bar B)$, $u\in C^{2,\gamma}(\Gamma)$ are all nonnegative, and $\int_\Gamma u\,=\,\mu$.
\end{theorem}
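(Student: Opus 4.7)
The plan is to bifurcate from the trivial steady state at $\mu=0$ via a fixed-point argument in rescaled variables. Writing $u = \mu v$, $V = \mu W$, $c = \mu\gamma$ under the normalisation $\int_\Gamma v = 1$, the system \eqref{eq:stat-V}--\eqref{eq:stat-u} becomes a perturbation in which the nonlinear coupling $\nabla\cdot(V\nabla c)$ acquires an explicit prefactor $\mu$; at $\mu=0$ the rescaled system decouples and admits the explicit solution $v_0 = 1/|\Gamma|$, $W_0 = k_2/(k_1|\Gamma|)$ together with $\gamma_0$ the unique solution of $\Delta\gamma_0-\alpha\gamma_0=0$, $\partial_\nu\gamma_0=\beta/|\Gamma|$. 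My goal is to continue this explicit solution to a branch for small $\mu>0$.

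For the iteration I would take a nonnegative $v\in C^{2,\gamma}(\Gamma)$ with $\int_\Gamma v = 1$ and solve sequentially: (i) the Helmholtz--Neumann problem for $\gamma$; (ii) the convection--diffusion equation for $W$, which under the substitution $W = e^{\mu\gamma/D}\rho$ becomes the divergence-form Robin problem $\nabla\cdot(e^{\mu\gamma/D}\nabla\rho) = 0$ in $B$, $D\partial_\nu\rho + k_1\rho = k_2 v\,e^{-\mu\gamma/D}$ on $\Gamma$, a small perturbation of a uniformly coercive Robin problem that is uniquely solvable in $C^{2,\gamma}(\bar B)$ for $\mu$ small; and (iii) the screened Laplace equation $d\Delta_\Gamma\tilde v - k_2\tilde v + k_1 W|_\Gamma = 0$ for the updated surface concentration. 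Maximum principles at each stage propagate non-negativity, and integrating the $W$-equation over $B$ yields $k_1\int_\Gamma W|_\Gamma = k_2\int_\Gamma v$, after which integration of the $\tilde v$-equation over $\Gamma$ gives $k_2\int_\Gamma\tilde v = k_1\int_\Gamma W|_\Gamma$, so $\int_\Gamma\tilde v = 1$. The map $T_\mu: v\mapsto\tilde v$ thus preserves the closed convex set $\{v\in C^{2,\gamma}(\Gamma): v\geq 0,\ \int_\Gamma v = 1\}$, and is compact and smooth in $v$ by standard Schauder estimates.

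To produce a fixed point I would apply the implicit function theorem to $F(v,\mu) := T_\mu(v) - v$ at $(v_0,0)$, restricted to the affine hyperplane $\{\int_\Gamma v = 1\}$ and valued in the zero-mean subspace of $C^{0,\gamma}(\Gamma)$. The main obstacle, and the real content of the argument, is verifying that the Fréchet derivative $\partial_v F(v_0,0) = A - I$ is invertible on the zero-mean tangent space, where $A := (k_2-d\Delta_\Gamma)^{-1}k_1\mathcal R\, k_2$ and $\mathcal R: g\mapsto W'|_\Gamma$ is the Robin-to-trace map associated with $D\Delta W'=0$ in $B$, $D\partial_\nu W'+k_1 W'=g$ on $\Gamma$. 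Here the key estimate is the energy identity
\begin{equation*}
	D\|\nabla W'\|_{L^2(B)}^2 + k_1\|W'\|_{L^2(\Gamma)}^2 \,=\, \int_\Gamma g\, W',
\end{equation*}
which by Cauchy--Schwarz yields $\|k_1\mathcal R(k_2 w)\|_{L^2(\Gamma)}\leq k_2\|w\|_{L^2(\Gamma)}$, the inequality being strict whenever $W'$ is nonconstant, which holds for any nonzero zero-mean $w$ when $B$ is connected. Combined with the spectral-gap bound $\|(k_2-d\Delta_\Gamma)^{-1}\|_{L^2\to L^2}\leq(k_2+d\lambda_1)^{-1}$ on the zero-mean subspace, with $\lambda_1>0$ the first positive eigenvalue of $-\Delta_\Gamma$, the operator $A$ has $L^2$-operator norm at most $k_2/(k_2+d\lambda_1)<1$, so $I-A$ is invertible on this subspace by Neumann series. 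Upgrading this invertibility from $L^2$ to the Schauder setting via standard elliptic regularity, the implicit function theorem then delivers a unique $C^{2,\gamma}$-branch $v(\mu)$ defined for $\mu\in[0,\mu_0)$. Since $v_0$ is strictly positive and $v(\mu)\to v_0$ in $C^0$, $v(\mu)$ remains positive for small $\mu$, and rescaling back produces the claimed steady state $(V,c,u) = (\mu W,\mu\gamma,\mu v)$ with $\int_\Gamma u = \mu$ and the required regularity.
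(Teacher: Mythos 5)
Your proposal is essentially correct, but it takes a genuinely different route from the paper. The paper works in the unscaled variables and applies Schauder's fixed-point theorem on the convex set $U_\mu=\{u\geq 0,\ \int_\Gamma u=\mu,\ \|u\|_{C^{1,\gamma}(\Gamma)}\leq K\}$, solving the same chain of linear problems ($c$, then $V$, then the surface update); the solvability of the Robin problem for $V$ is obtained from the Fredholm alternative, and the quantitative smallness condition \eqref{eq:cond-muK} linking $\mu$ and $K$ is what makes the scheme self-consistent. You instead rescale by $\mu$, observe that the coupling carries an explicit factor $\mu$, and continue the explicit constant state by the implicit function theorem; your substitution $W=e^{\mu\gamma/D}\rho$ is a nice touch, since it turns the $V$-step into a uniformly coercive divergence-form Robin problem and thereby removes the need for the paper's smallness-for-solvability argument. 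What your approach buys is a locally unique, smooth branch $\mu\mapsto(V,c,u)$ bifurcating from zero mass, at the price of having to verify invertibility of the linearisation; the paper's Schauder argument needs only compactness, continuity and a self-map property, but gives no uniqueness or branch structure.

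Two points need repair, both fixable. First, your uniform bound $\|A\|_{L^2\to L^2}\leq k_2/(k_2+d\lambda_1)$ uses that zero-mean functions on $\Gamma$ are orthogonal to the kernel of $-\Delta_\Gamma$, which is true only when $\Gamma$ is connected; the paper explicitly allows $\Gamma$ to have several components, and then the zero-mean subspace contains locally constant functions on which $(k_2-d\Delta_\Gamma)^{-1}$ has norm $1/k_2$, so the Neumann-series argument degenerates ($\|A\|\leq 1$ only). The strict inequality you already record in the energy identity (strict unless $W'$ is constant, and for connected $B$ a constant $W'$ forces $w\equiv 0$ on the zero-mean subspace) does give injectivity of $I-A$; since $A$ is compact (the Robin-to-trace map gains regularity), invertibility then follows from the Fredholm alternative, so you should argue this way rather than via a norm bound. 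Second, the functional setting of the implicit function theorem is slightly off: with domain $C^{2,\gamma}$ and target $C^{0,\gamma}$ the linearisation $A-I$ is not surjective (the identity part loses two derivatives), so you should pose $F$ on the zero-mean (affine) subspace of a single space, e.g.\ $C^{0,\gamma}(\Gamma)$, where $A$ is compact and $I-A$ invertible, and then recover $v(\mu)\in C^{2,\gamma}(\Gamma)$ and the $C^{2,\gamma}$ regularity of $W,\gamma$ a posteriori by elliptic bootstrapping, exactly as the paper does in its final regularity step.
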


\begin{proof}
We use an iterative scheme, together with Schauder's fixed-point theorem.

First define, for $K, \mu>0$, the closed, convex set
\begin{align*}
U_\mu =\left\{u\in C^{2,\gamma}(\Gamma): \|u\|_{C^{1,\gamma}(\Gamma)}\leq K\right\}\cap\left\{u: u\geq 0,\int_{\Gamma} u = \mu\right\}\subset C^{1,\gamma}(\Gamma),
\end{align*}
which is non-empty if $\mu\leq K|\Gamma|$.

Next, let us start the iteration with an arbitrary $u\in U_\mu$. It follows from \cite[Theorem 3.2, p.137]{lady1} that there exists a unique solution $c\in C^{2,\gamma}(\Gamma)$ of
\begin{align*}
	\Delta c -\alpha c = 0~\mathrm{in}~B,\qquad \nu\cdot\nabla c|_{\Gamma} = \beta u \quad\text{ on }\Gamma.
\end{align*}
By the Schauder estimate \cite[Theorem 3.1, p.135]{lady1} we then have
\begin{align}
	\|c\|_{C^{2,\gamma}(\bar B)}\leq c_1(\alpha,\beta,\Gamma)\left[\beta K + \|c\|_{C^0(\bar B)}\left(c_1 + \alpha + \alpha^{(2+\gamma)/\gamma}\right)\right]. \label{eq:stat-est-c}
\end{align}
Moreover, by the maximum principle and $u\geq 0$ we deduce that $c\geq 0$.\\
We next consider for given $u$ and $c$ the elliptic Robin boundary-value problem
\begin{align}
	D\Delta V -\nabla\cdot(V\nabla c) &= 0\quad\text{ in }B, \label{Vstead}\\
	-\nu\cdot(D\nabla V - V\nabla c) &= k_1V - k_2u\quad\text{ on }\Gamma. \label{dVstead}
\end{align}
By \cite[Theorem 6.31]{GiTr01} and the Fredholm alternative, see the discussion  \cite[p.124]{GiTr01}, a solution $V\in C^{2,\gamma}(\bar B)$ exists as long as the corresponding homogeneous problem, that is \eqref{Vstead}, \eqref{dVstead} with $u$ replaced by zero on the right-hand side, has only the trivial solution. Assuming that $V_*$ is a solution of the homogeneous problem, we deduce that
\begin{align}
	\int_B D|\nabla V_*|^2 +\frac{\alpha}{2} cV_*^2 + k_1 \int_\Gamma V_*^2 \,&=\, \frac{\beta}{2}\int_\Gamma V_*^2u\,\leq\,
	\frac{\beta}{2}\|V_*\|_{L^4(\Gamma)}^2\|u\|_{L^1(\Gamma)}^{\frac{1}{4}}\|u\|_{L^3(\Gamma)}^{\frac{3}{4}} \nonumber\\
	&\leq\, C_S(B)\frac{\beta}{2}\|V_*\|_{H^1(B)}^2\mu^{\frac{1}{4}}K^{\frac{3}{4}}. \label{eq:stat-est1}
\end{align}
On the other hand, the left-hand side can be estimated from below by $c_0(D,k_1,B)\|V_*\|_{H^1(B)}^2$, using the trace theorem in $H^1(B)$. Therefore, the condition
\begin{align}
	\beta\mu^{\frac{1}{4}} K^{\frac{3}{4}}\,\leq\, \frac{c_0(D,k_1,B)}{C_S(B)} \label{eq:cond-muK}
\end{align}
guarantees the existence of a unique solution $V\in C^{2,\gamma}(\bar B)$ of \eqref{Vstead}, \eqref{dVstead}.
Moreover, by \cite[Theorem 6.30]{GiTr01} we have
\begin{align*}
	\|V\|_{C^{2,\gamma}(\bar B)} \,\leq\, C(\gamma,\|c\|_{C^{1,\gamma}(\bar B)},\|u\|_{C^{1,\gamma}(\Gamma)},D,\alpha,\beta)\big(\|V\|_{C^0(\bar B)}+\|u\|_{C^{1,\gamma}(\Gamma)}\big),
\end{align*}
while \cite[Theorem 3.14(iv)]{Nitt11} implies that 
\begin{align*}
	\|V\|_{C^0(\bar B)}\leq C(D,\|\nabla c\|_{C^0(\bar B)},k_1,B)k_2\|u\|_{C^0(\Gamma)}\,\leq\, C(D,\alpha,\beta,B,k_1,k_2,K).
\end{align*}
Putting the last two estimates together and using \eqref{eq:stat-est-c} we deduce
\begin{align}
	\|V\|_{C^{2,\gamma}(\bar B)} \,\leq\, C(\gamma,c_1,D,\alpha,\beta,B,k_1,k_2,K). \label{Vbound5}
\end{align}
Testing \eqref{Vstead} with $V^-\leq 0$ leads to the estimate
\begin{align*}
	\int_B D|\nabla V^-|^2 +\frac{\alpha}{2} c|V^-|^2 + k_1 \int_\Gamma |V^-|^2 \,&=\, \int_\Gamma k_2uV^-+\frac{\beta}{2}|V^-|^2u\,\leq\,
	\frac{\beta}{2}\|V^-\|_{L^4(\Gamma)}^2\|u\|_{L^1(\Gamma)}^{\frac{1}{4}}\|u\|_{L^3(\Gamma)}^{\frac{3}{4}}\\
	&\leq\, C_S(B)\frac{\beta}{2}\|V^-\|_{H^1(B)}^2\mu^{\frac{1}{4}}K^{\frac{3}{4}},
\end{align*}
which is completely analogous to \eqref{eq:stat-est1}. Hence, the condition \eqref{eq:cond-muK} also guarantees nonnegativity of $V$.

Also, from the divergence structure of (\ref{Vstead}), and by (\ref{dVstead}), we have 
\begin{align}
	\int_{\Gamma} V = \frac{k_2}{k_1}\int_{\Gamma}u. \label{mass_const}
\end{align}
%
%

To complete the iteration, we finally solve
\begin{align}
	d\Delta_{\Gamma}\un +k_1V - k_2\un =0,\label{u_new}
\end{align}
for $\un=\un(V)$ on $\Gamma$, which implies, via (\ref{mass_const}),
\begin{align}
	\int_{\Gamma} \un = \int_{\Gamma} u,
\end{align}
thus preserving the mass constraint.

By the maximum principle, we deduce that $\|\un\|_{C^0(\Gamma)}\leq \frac{k_1}{k_2}\|V\|_{C^0(\bar B)}$. Hence, by Schauder theory and \eqref{Vbound5}
\begin{align}
	\|\un\|_{C^{2,\gamma}(\Gamma)}\leq C(\gamma,c_1,D,\alpha,\beta,B,k_1,k_2,K). \label{u_new4}  
\end{align}
Let us finally control the $C^{1,\gamma}(\Gamma)$-norm of $\un$. By elliptic $L^p$-theory we obtain that for any $2< p<4$
\begin{align*}
	\|\un\|_{W^{2,p}(\Gamma)}\,\leq\, C(d,k_1,\Gamma)\|V\|_{L^p(\Gamma)}\,\leq\, C(d,k_1,\Gamma)\|V\|_{L^1(\Gamma)}^{1-\theta}\|V\|_{L^4(\Gamma)}^\theta 
\end{align*}
for some $\theta=\theta_p\in(0,1)$. Since $W^{2,p}(\Gamma)$ embeds continuously into $C^{1,\gamma}(\Gamma)$ for $\gamma=1-\frac{2}{p}$, and since
$H^1(B)$ embeds continuously into $L^4(\Gamma)$ we therefore obtain, using \eqref{mass_const}, for any $0<\gamma<\frac{1}{2}$ that
\begin{align}
	\|\un\|_{C^{1,\gamma}(\Gamma)} \,\leq\, C(d,k_1,B,\gamma)k_2^{1-\theta_\gamma}\mu^{1-\theta_\gamma}\|V\|_{H^1(B)}^{\theta_\gamma} \label{eq:stat-H1}
\end{align}
for some $0<\theta_\gamma<1$. Testing \eqref{Vstead}, \eqref{dVstead} with $V$ yields
\begin{align*}
	\int_B D|\nabla V|^2 +\frac{\alpha}{2} cV^2 + k_1 \int_\Gamma V^2 \,&=\,\int_\Gamma\left( k_2uV+ \frac{\beta}{2}V^2u\right)\\
	&\leq\, k_2\|u\|_{L^{\frac{4}{3}}(\Gamma)}\|V\|_{L^4(\Gamma)} +
	\frac{\beta}{2}\|V\|_{L^4(\Gamma)}^2\|u\|_{L^1(\Gamma)}^{\frac{1}{4}}\|u\|_{L^3(\Gamma)}^{\frac{3}{4}}\\
	&\leq\, k_2\|u\|_{L^1(\Gamma)}^{\frac{1}{2}}\|u\|_{L^2(\Gamma)}^{\frac{1}{2}}\|V\|_{H^1(B)} + C_S(B)\frac{\beta}{2}\|V\|_{H^1(B)}^2\mu^{\frac{1}{4}}K^{\frac{3}{4}},\\
	&\leq\, k_2\mu^{\frac{1}{2}}K^{\frac{1}{2}}\|V\|_{H^1(B)} + C_S(B)\frac{\beta}{2}\|V\|_{H^1(B)}^2\mu^{\frac{1}{4}}K^{\frac{3}{4}},
\end{align*}
and the condition \eqref{eq:cond-muK} ensures that
\begin{align*}
	\|V\|_{H^1(B)} \,\leq\, C(D,k_1,B,C_S(B))k_2\mu^{\frac{1}{2}}K^{\frac{1}{2}}.
\end{align*}
Using this in \eqref{eq:stat-H1} finally gives
\begin{align*}
	\|\un\|_{C^{1,\gamma}(\Gamma)} \,\leq\, C(d,k_1,B,\gamma,D,C_S(B))k_2\mu^{1-\frac{\theta_\gamma}{2}}K^{\frac{\theta_\gamma}{2}}.
 \end{align*}

Thus, in order to satisfy \eqref{eq:cond-muK} and, to guarantee  $\|\un\|_{C^{1,\gamma}(\Gamma)} \leq K$, we need in addition
\begin{align}
	 C(d,k_1,B,\gamma,D,C_S(B))k_2\mu^{1-\frac{\theta_\gamma}{2}}\,\leq\, K^{1-\frac{\theta_\gamma}{2}}. \label{eq:cond-muK2}
\end{align}
We finally need $U_\mu$ to be non-empty. For this the condition
\begin{align}
	\mu\,=\, \int_\Gamma u \,\leq\, |\Gamma|\|u\|_{C^0(\Gamma)}\,\leq\, |\Gamma| K \label{eq:cond-muK2a}
\end{align}
is necessary and sufficient, hence both \eqref{eq:cond-muK2}, \eqref{eq:cond-muK2a} are satisfied if
\begin{align}
	C'(d,k_1,B,\gamma,D,C_S(B),k_2)\mu\,\leq\, K. \label{eq:cond-muK2b}
\end{align}
We now check that it is possible to meet all conditions if $0<\mu<\mu_0$, where $\mu_0>0$ satisfies
\begin{align*}
	\mu_0 \,\leq\, \frac{c_0(D,k_1,B)}{\beta C_S(B)}C'(d,k_1,B,\gamma,D,C_S(B),k_2)^{-\frac{3}{4}}.
\end{align*}

Consequently, for $\mu<\mu_0$ and $K$ chosen such that \eqref{eq:cond-muK}, \eqref{eq:cond-muK2b} hold, the mapping $T:u\mapsto\un$ takes $U_\mu$ into itself. Moreover, by (\ref{u_new4}), $T(U_\mu)$ is compact in the $C^{1,\gamma}$-topology. Continuity of $T$ follows easily by taking differences, $T(u_1) - T(u_2)$, and applying Schauder estimates to the $c, V$ and $u$ equations in turn.
Hence, all the conditions of Schauder's fixed-point theorem are satisfied, and we are therefore guaranteed a solution of the steady-state problem.
\end{proof}


For the particular case of spherical cell shapes, we can prove existence of stationary states for any given total mass.
\begin{proposition}
Let  $B\subset\mathbb{R}^3$ be a ball of radius $R$. Then for every $M>0$ there exists a spherically symmetric steady state $(V,c,u)$ with total mass  $\int_B V + \int_\Gamma u \,=\,M$. Moreover, $u$ is constant and $V,c$ are smooth.
\end{proposition}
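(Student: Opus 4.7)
The plan is to exploit the spherical symmetry to reduce the coupled steady--state system to an explicit one--parameter family indexed by the constant value of $u$, and then to hit any prescribed mass by a one--dimensional intermediate value argument.

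First, spherical symmetry forces $u$ to be constant on $\Gamma=\partial B_R$, say $u\equiv u_0>0$, so $\Delta_\Gamma u_0=0$. Equation \eqref{eq:stat-u} then immediately gives $V|_\Gamma=(k_2/k_1)u_0$, also constant. The linear problem \eqref{eq:stat-c}, \eqref{eq:stat-c-bdry} for $c$ becomes the radial ODE $c''(r)+(2/r)c'(r)=\alpha c(r)$ with $c'(R)=\beta u_0$ and regularity at the origin, whose unique solution is $c(r)=u_0\,\tilde c(r)$, with $\tilde c(r)=A\sinh(\sqrt{\alpha}\,r)/r$ for an explicit $A=A(\alpha,\beta,R)>0$. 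In particular $c$ is smooth and positive, and depends linearly on $u_0$.

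Next I would handle \eqref{eq:stat-V}, \eqref{eq:stat-V-bdry}. With $V|_\Gamma=(k_2/k_1)u_0$ the Robin--type boundary condition reduces to $\nu\cdot F=0$, where $F:=D\nabla V-V\nabla c$. Thus $F$ is divergence--free in $B$ and has vanishing normal trace on $\Gamma$. Using the factorisation $F=De^{c/D}\nabla(Ve^{-c/D})$, testing $\nabla\cdot F=0$ with $Ve^{-c/D}$ and integrating by parts gives
\begin{equation*}
\int_B De^{c/D}\bigl|\nabla(Ve^{-c/D})\bigr|^2\,dx=0,
\end{equation*}
so $V\equiv K e^{c/D}$ for some constant $K$. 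The boundary identity fixes $K=(k_2/k_1)u_0\,e^{-u_0\tilde c(R)/D}$, and smoothness (and positivity) of $V$ are immediate from this explicit form.

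Finally I would parametrise the total mass as
\begin{equation*}
M(u_0)\,=\,\frac{k_2}{k_1}\,u_0\,e^{-u_0\tilde c(R)/D}\int_B e^{u_0\tilde c(r)/D}\,dx\,+\,u_0\,|\Gamma|.
\end{equation*}
This map is continuous on $[0,\infty)$ with $M(0)=0$ and $M(u_0)\to\infty$ as $u_0\to\infty$ (already the surface contribution $u_0|\Gamma|$ diverges), so the intermediate value theorem yields some $u_0>0$ with $M(u_0)$ equal to any prescribed $M>0$. I do not anticipate any real obstacle: the crux of the argument is the Hodge--type flux factorisation, which makes the $V$--equation explicitly solvable and collapses the full coupled system to a single scalar equation in $u_0$; everything else is elementary ODE and IVT.
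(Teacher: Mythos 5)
Your argument is correct and follows essentially the same route as the paper: spherical symmetry forces $u\equiv u_0$ and $V|_\Gamma=(k_2/k_1)u_0$, $c$ is the explicit modified spherical Bessel profile $\propto\sinh(\sqrt{\alpha}r)/r$, $V$ is the Boltzmann-type solution $V=V_0e^{(c(r)-c(R))/D}$, and the prescribed mass is attained by an intermediate-value argument in $u_0$. The only (harmless) variation is that you obtain $V\propto e^{c/D}$ via the factorisation $D\nabla V-V\nabla c=De^{c/D}\nabla\bigl(Ve^{-c/D}\bigr)$ and an energy identity, whereas the paper simply integrates the radial equation directly.
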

\begin{proof}
The symmetry immediately implies that $u$ is constant: $u(y) = u_0$, and also $V = V_0 = \frac{k_2}{k_1}u_0$ on $\Gamma$ -  it remains to determine $u_0$, $V=V(r)$ and $c=c(r)$ in the interior, where $r$ is the radius.

Equation \eqref{eq:stat-c} reads, in spherical polar coordinates,
\begin{align*}
	rc'' + 2c'  =  \alpha cr,
\end{align*}
and hence $c$ is a modified (spherical) Bessel function of the first kind, namely $c=c_0\frac{\sinh(\sqrt{\alpha}r)}{\sqrt{\alpha}r}$.
The boundary condition \eqref{eq:stat-c-bdry} fixes $c_0$ as a function of $u_0$:
\begin{align*}
	c_0 = \beta u_0\left(\frac{\cosh(\sqrt{\alpha}R)}{R} - \frac{\sinh(\sqrt{\alpha}R)}{\sqrt{\alpha}R^2}\right)^{-1}.
\end{align*}
With this we can find $V$ by solving \eqref{eq:stat-V}, \eqref{eq:stat-V-bdry}, which gives
\begin{align*}
	V = V_0e^{(c(r) - c(R)/D}.
\end{align*}

The final condition is then  
\begin{align*}
	M = 4\pi R^2u_0 + 4\pi\int_0^Rr^2\frac{k_2}{k_1}u_0e^{(c(r)-c(R))/D}~dr.
\end{align*} 
Here, the right-hand is zero when $u_0=0$, and tends to infinity as $u_0\rightarrow\infty$. Hence, there exists at least one spherically symmetric steady-state solution for any given mass. 
\end{proof}

\section{Conclusions}
In this paper, we have analysed the well-posedness of a model for spontaneous cell polarisation. The model takes the form of a rather generic coupled bulk/surface reaction-diffusion-advection system with feedback and  total-mass conservation which could conceivably be of physical relevance beyond its original biological motivation. Coupling between bulk and surface pde's has recently gained a lot of attention, 
but well-posedness results for models with advection seem to be much thinner on the ground. Our bulk equations resemble the PKS model for chemotaxis, but with a somewhat weaker form of feedback, which nevertheless leaves open the (unproven) possibility of finite-time blow-up.

We have made several contributions to developing an existence theory for the model. First, we proved, under a precise smallness condition on the initial data, that classical solutions exist globally in time. This is in some sense analogous to one half of the well-established dichotomy in the PKS model. The critical space for the smallness condition is $L^2$ for the surface variable, $u$, and between $L^1$ and any $L^p$, $p>1$ for the bulk variable, $V$. Scaling arguments \cite{Vela16} seem to suggest that these findings are, with respect to the usual $L^p$ spaces, optimal, but we are not able to prove an analogue of the other half of the PKS dichotomy (blow-up for concentrated data), due to the absence of suitable moment-estimates.

The main model we have considered uses a rather ad-hoc choice for the constitutive functions that determine the boundary coupling. One could easily argue for the presence of some kind of saturation effect that prevents the associated Neumann sources from becoming infinite. We have therefore analysed corresponding regularisations of our model, and have proved that this leads unconditionally  to global-in-time existence of classical solutions. This precludes any, seemingly unphysical, occurrence of singular mass-concentration. Nevertheless, blow-up solutions of the original system can be expected to have approximate counterparts in the regularised systems, with strongly heterogeneous concentration profiles that may still be interpreted as polarised states.

Our partial results leave open a number of avenues for further inquiry. In terms of the biological motivation, a thorough analysis of the qualitative behaviour (polarisation, more complex pattern formation), possibly in combination with numerical simulations would be the natural next step. Again, the complex structure of the model makes this a rather challenging prospect, and we therefore leave this question for the future.
  
\begin{appendix}
 
\section{Positivity of solutions}
Here we show that classical solutions corresponding to nonnegative initial data remain nonnegative on their interval of existence.
\begin{proposition}\label{pos1}
Suppose $B\subset\mathbb{R}^3$ is a smooth bounded domain and $(V,c,u)$ is a classical solution of (\ref{V})-(\ref{u}) on ${B}\times[0,T)$ corresponding to non-negative initial data $V_0$, $u_0$. Then $V(\cdot,t)$, $c(\cdot,t)$ and $u(\cdot,t)$ are also non-negative for all $t\in [0,T)$.
\end{proposition}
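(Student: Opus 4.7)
The plan is to prove nonnegativity of $V$ and $u$ jointly by a Stampacchia-type energy estimate, and then to deduce nonnegativity of $c$ a posteriori from the elliptic equation for $c$.

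Setting $V_-:=\max(-V,0)\geq 0$ in $B$ and $u_-:=\max(-u,0)\geq 0$ on $\Gamma$, both of which vanish at $t=0$, I would test (\ref{V}) with $V_-$ and (\ref{u}) with $u_-$, then integrate by parts. The diffusive pieces yield the standard dissipations $-D\int_B|\nabla V_-|^2$ and $-d\int_\Gamma|\nabla u_-|^2$. The chemotactic contribution, after two integrations by parts using (\ref{c}) together with (\ref{cflux}), produces a bulk term $-\tfrac{\alpha}{2}\int_B cV_-^2$ and a boundary term $+\tfrac{\beta}{2}\int_\Gamma uV_-^2$. The exchange terms coming from (\ref{Vflux}) and (\ref{u}) contribute $-k_1\int_\Gamma V_-^2$, $-k_2\int_\Gamma uV_-$, $-k_1\int_\Gamma Vu_-$ and $-k_2\int_\Gamma u_-^2$. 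Splitting $u=u_+-u_-$ and $V=V_+-V_-$, the favourable parts $-k_2\int_\Gamma u_+V_-\leq 0$ and $-k_1\int_\Gamma V_+u_-\leq 0$ may simply be discarded, leaving only a cross term proportional to $\int_\Gamma V_-u_-$, which I handle by Young's inequality.

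The main obstacle will be the destabilising boundary term $+\tfrac{\beta}{2}\int_\Gamma uV_-^2$ coming from the advection, together with the sign-ambiguous bulk term $-\tfrac{\alpha}{2}\int_B cV_-^2$. To control them I would exploit the fact that, on any compact subinterval $[0,T^*]\subset[0,T)$, a classical solution is uniformly bounded, so that $\|c\|_{L^\infty(B\times[0,T^*])}$ and $\|u\|_{L^\infty(\Gamma\times[0,T^*])}$ are finite constants. The bulk $c$-term is then dominated by $C\|c\|_\infty\int_B V_-^2$, while each remaining boundary integral $\int_\Gamma V_-^2$ is absorbed into the volume dissipation via the standard trace inequality
\begin{align*}
\|V_-\|_{L^2(\Gamma)}^2 \,\leq\, \varepsilon\|\nabla V_-\|_{L^2(B)}^2 + C_\varepsilon\|V_-\|_{L^2(B)}^2,
\end{align*}
with $\varepsilon$ chosen small relative to $D$. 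Crucially, I do not need $c\geq 0$ at this stage, only its $L^\infty$ bound.

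The outcome is a Gronwall inequality of the form
\begin{align*}
\frac{d}{dt}\Big(\int_B V_-^2 + \int_\Gamma u_-^2\Big) \,\leq\, C(T^*)\Big(\int_B V_-^2 + \int_\Gamma u_-^2\Big),
\end{align*}
which, combined with $V_-(\cdot,0)=u_-(\cdot,0)=0$, forces $V_-\equiv u_-\equiv 0$ on $[0,T^*]$, and hence on $[0,T)$. Once $u\geq 0$ is established, positivity of $c$ is immediate: testing (\ref{c}) with $c_-:=\max(-c,0)$ and using (\ref{cflux}) with $u\geq 0$ yields $\int_B|\nabla c_-|^2 + \alpha\int_B c_-^2 + \beta\int_\Gamma uc_- = 0$, a sum of three nonnegative quantities, so $c_-\equiv 0$.
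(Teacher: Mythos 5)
Your proposal is correct and follows essentially the same route as the paper: a Stampacchia/negative-part $L^2$ estimate for $V$ and $u$ jointly, with the advection term integrated by parts via (\ref{c}), (\ref{cflux}) to produce exactly the terms $-\tfrac{\alpha}{2}\int_B c V_-^2$ and $+\tfrac{\beta}{2}\int_\Gamma u V_-^2$, controlled on compact subintervals by the $L^\infty$ bounds on $c$ and $u$ together with the trace inequality, then Gronwall, and finally $c\geq 0$ by testing the elliptic equation with $c^-$. The only cosmetic difference is the exchange term: the paper discards it at once using the monotonicity inequality $(k_1V^- - k_2u^-)(k_1V-k_2u)\leq 0$, whereas you split into positive and negative parts and absorb the cross term by Young's inequality and the trace estimate, which works equally well.
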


\begin{proof}
We test (\ref{V}) with the negative part of $V$, i.e.~$V^-:=-\min\{V,0\}\geq 0$, and (\ref{u}) with the negative part of $u$, i.e. $u^-$. Using Stampacchia's Lemma, and arguing as in the proof of Lemma \ref{lem:p-powers}, we arrive at 
\bq
\frac{d}{dt}\left(\|V^-\|_{L^2(B)}^2 + c_1\|u^-\|_{L^2(\Gamma)}^2\right) & \leq & C\int_{\Gamma}(k_1V^-- k_2u^-)(k_1V-k_2u)\nonumber\\
 & - & 2D\|\nabla V^-\|_{L^2(B)}^2 + \beta\int_{\Gamma}(V^-)^2u -\alpha\int_B (V^-)^2c.\label{eq:b98-1}
\eq
Next observe that
\begin{align}
	\sup_{0\leq t\leq \tilde T}\big(\|u\|_{L^\infty(\Gamma)}(t) + \|c\|_{L^\infty(B)}(t)\big)\leq C(\tilde T)\quad\text{ for any }0<\tilde T<T \label{eq:A-Ttilde}
\end{align}
and that $(k_1V^- - k_2u^-)(k_1V-k_2u)\leq 0$ by definition. Using (\ref{trace2}) we therefore deduce that
\begin{align}
	  \beta\int_{\Gamma}(V^-)^2u  \,\leq\, & C_{\eps}\beta\|u\|_{L^\infty(\Gamma)}\|V^-\|_{L^2(B)}^2 +  \eps\beta\|u\|_{L^\infty(\Gamma)}\|\nabla V^-\|_{L^2(B)}^2, \notag\\
	 \leq \,& C(D,\beta,\tilde T)\|V^-\|_{L^2(B)}^2 + 2D\|\nabla V^-\|_{L^2(B)}^2, \label{uV_minus2}
\end{align}
for all $t\leq\tilde T$, hence by \eqref{eq:b98-1} and \eqref{eq:A-Ttilde}
\begin{align*}
	\frac{d}{dt}\left(\|V^-\|_{L^2(B)}^2 + c_1\|u^-\|_{L^2(\Gamma)}^2\right) & \leq  C(D,\beta,\tilde T)\|V^-\|_{L^2(B)}^2 +\alpha C(\tilde T)\|V^-\|_{L^2(B)}^2,
\end{align*}
and by Gronwall's inequality we deduce that $V^-(\cdot,t)=0$ and $u^-(\cdot,t)=0$ for all $t\leq\tilde T<T$, hence for all $0<t<T$.

Furthermore, testing (\ref{c}) with $c^-$ results in
\begin{align*}
	\|\nabla c^-\|_{L^2(B)}^2 + \alpha\|c^-\|_{L^2(B)}^2 \,= - \, \beta\int_{\Gamma}c^- u \,\leq\,0,
\end{align*}
which ensures that also $c^-= 0$, as required.
\end{proof}

We can also deduce a similar result for the case $B=\R^3_+$. 
\begin{proposition}\label{pos2}
Suppose $B=\mathbb{R}^3_+$ and $(V,c,u)$ is a classical solution of (\ref{V})-(\ref{u}) on $\overline{B}\times[0,T)$ corresponding to non-negative initial data $V_0$, $u_0$. Assume further that for all $0<\tilde T<T$ there exists $C(\tilde T)>0$ such that for all $0\leq t\leq \tilde T$
\begin{align*}
	&\quad\|u(t)\|_{L^1(\Gamma)\cap L^\infty(\Gamma)}+\|V(t)\|_{L^1(B)\cap L^\infty(B)} +\\
	+ &\|\nabla u(t)\|_{L^2(\Gamma)} + \|\nabla V(t)\|_{L^2(B)} + \|c(t)\|_{L^\infty(B)}+ \|\nabla c(t)\|_{L^2(B)} \,\leq\, C(\tilde T).
\end{align*}
Then $V(\cdot,t)$, $c(\cdot,t)$ and $u(\cdot,t)$ are also non-negative for all $t\in[0,T)$.
\end{proposition}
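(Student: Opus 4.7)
The approach is to mimic the argument of Proposition \ref{pos1}, but with two differences: the standard Stampacchia truncation calculations must be justified on the unbounded half-space, and in place of the bounded-domain trace inequality \eqref{trace2} we will use the half-space trace inequality \eqref{eq:trace}. The uniform-in-time bounds postulated in the hypothesis are exactly what is needed to make the integration-by-parts rigorous and to close a Gronwall estimate.

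First I would fix an arbitrary $\tilde T < T$ and work on $[0,\tilde T]$, so that the quantities $\|u\|_{L^\infty(\Gamma)}$, $\|c\|_{L^\infty(B)}$, $\|\nabla V\|_{L^2(B)}$, $\|\nabla u\|_{L^2(\Gamma)}$, $\|\nabla c\|_{L^2(B)}$, and the $L^1\cap L^\infty$ norms of $u$ and $V$ are bounded by some $C(\tilde T)$. Testing \eqref{V} against $V^-$ and \eqref{u} against $u^-$ is legitimate because $V,u \in L^\infty \cap H^1$ in their respective spaces, so the truncations $V^-, u^-$ belong to the appropriate Sobolev spaces with compact enough decay that all boundary/bulk integrals converge. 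Arguing as in Lemma \ref{lem:p-powers} and Proposition \ref{pos1}, an integration by parts (using \eqref{c}, \eqref{cflux} for the advective term, together with $c \geq 0$ which itself will be obtained at the end) yields
\begin{align*}
\frac{d}{dt}\bigl(\|V^-\|_{L^2(B)}^2 + c_1\|u^-\|_{L^2(\Gamma)}^2\bigr)
&\leq -2D\|\nabla V^-\|_{L^2(B)}^2 + \beta\int_{\Gamma}(V^-)^2 u - \alpha\int_B (V^-)^2 c \\
&\quad + C\int_{\Gamma}(k_1 V^- - k_2 u^-)(k_1 V - k_2 u),
\end{align*}
where the last term is nonpositive by the definition of positive/negative parts.

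Next, to absorb the $\beta\int_\Gamma (V^-)^2 u$ term, I would invoke the half-space trace inequality \eqref{eq:trace}: since $V^- \in H^1(B)$ with the required decay, $\|V^-\|_{L^4(\Gamma)}^2 \leq C\|\nabla V^-\|_{L^2(B)}^2$, so that
\begin{align*}
\beta\int_\Gamma (V^-)^2 u \,\leq\, \beta\|u\|_{L^2(\Gamma)}\|V^-\|_{L^4(\Gamma)}^2 \,\leq\, C(\tilde T)\beta\|\nabla V^-\|_{L^2(B)}^2,
\end{align*}
combined with an interpolation of $\|u\|_{L^2(\Gamma)}$ between $\|u\|_{L^1(\Gamma)}$ and $\|u\|_{L^\infty(\Gamma)}$. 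Since this estimate alone does not have a favourable constant, I would alternatively split via Young's inequality, bounding $\|u\|_{L^\infty(\Gamma)}$ to convert $\int_\Gamma(V^-)^2 u$ into $\varepsilon\|\nabla V^-\|_{L^2}^2 + C_\varepsilon \|V^-\|_{L^2(B)}^2$ after using the full trace-type bound; a standard such estimate is $\|V^-\|_{L^2(\Gamma)}^2 \leq \varepsilon \|\nabla V^-\|_{L^2(B)}^2 + C_\varepsilon \|V^-\|_{L^2(B)}^2$ (which holds on $\mathbb R^3_+$ for $H^1 \cap L^2$ functions via a cutoff argument using the uniform boundedness of $V^-$). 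Choosing $\varepsilon$ small then absorbs the gradient term into the $-2D\|\nabla V^-\|_{L^2(B)}^2$ already present, leaving
\begin{align*}
\frac{d}{dt}\bigl(\|V^-\|_{L^2(B)}^2 + c_1\|u^-\|_{L^2(\Gamma)}^2\bigr) \,\leq\, C(\tilde T)\bigl(\|V^-\|_{L^2(B)}^2 + c_1\|u^-\|_{L^2(\Gamma)}^2\bigr).
\end{align*}
Since $V^-(\cdot,0)=u^-(\cdot,0)=0$, Gronwall's inequality gives $V^-\equiv 0$ and $u^-\equiv 0$ on $[0,\tilde T]$, and since $\tilde T<T$ was arbitrary we conclude on all of $[0,T)$. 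Finally, testing \eqref{c} against $c^-$ and using \eqref{cflux} with the now-established $u\geq 0$ yields $\|\nabla c^-\|_{L^2(B)}^2 + \alpha\|c^-\|_{L^2(B)}^2 = -\beta\int_\Gamma c^- u \leq 0$, hence $c\geq 0$.

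The main obstacle is purely technical: on the unbounded half-space we must justify the truncation test-functions and the integration by parts, i.e.\ we must know that $V^-\nabla V, (V^-)^2 \nabla c$, etc., decay fast enough at infinity for the boundary integrals at infinity to vanish. This is precisely why the proposition hypothesizes simultaneous $L^1\cap L^\infty$ bulk/surface bounds and $L^2$ gradient bounds for $V,u,c$: together they imply that all the relevant quantities are in $L^1$ of their respective domains (in bulk or on $\Gamma$), and one obtains the required decay via an approximation of $V^-, u^-, c^-$ by compactly supported cutoffs, followed by a standard limiting argument in which the cutoff error tends to zero thanks to the integrability assumptions.
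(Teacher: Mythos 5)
Your proposal is correct in substance and follows the paper's overall strategy (re-derive the $V^-$/$u^-$ energy inequality \eqref{eq:b98-1} of Proposition \ref{pos1} and close with Gronwall, then get $c\geq 0$ last by testing \eqref{c} with $c^-$), but it treats the decisive boundary term $\beta\int_\Gamma (V^-)^2u$ by a different trace estimate. The paper uses the half-space trace interpolation from \cite{Lion84}, giving $\int_\Gamma (V^-)^2u\leq \|V^-\|_{L^3(\Gamma)}^2\|u\|_{L^3(\Gamma)}\leq C(\tilde T)\|\nabla V^-\|_{L^2(B)}^{5/3}\|V^-\|_{L^2(B)}^{1/3}$ (with $\|u\|_{L^3(\Gamma)}$ controlled by interpolating the assumed $L^1\cap L^\infty$ bound), and then Young's inequality to absorb the gradient part into $-2D\|\nabla V^-\|_{L^2(B)}^2$. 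You instead pull out $\|u\|_{L^\infty(\Gamma)}$ and use the elementary interpolated trace bound $\|f\|_{L^2(\Gamma)}^2\leq \eps\|\nabla f\|_{L^2(B)}^2+C_\eps\|f\|_{L^2(B)}^2$, which does hold on $\R^3_+$ for every $f\in H^1(\R^3_+)$ (fundamental theorem of calculus in $x_3$, Cauchy--Schwarz, Young, plus density of compactly supported smooth functions — the ``cutoff using boundedness of $V^-$'' you mention is not the right justification, but the inequality itself is fine since the hypotheses put $V^-\in H^1(\R^3_+)$); choosing $\eps$ small achieves the same absorption. You also correctly recognised and discarded the naive use of \eqref{eq:trace} with $\|u\|_{L^2(\Gamma)}$, which carries no small factor and cannot be absorbed. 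The two routes are of essentially equal strength; the paper's uses slightly weaker information on $u$ ($L^3$ rather than $L^\infty$), yours is more elementary.

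One small repair: do not invoke ``$c\geq 0$, to be obtained at the end'' when deriving the differential inequality — taken literally this is circular. It is also unnecessary: unlike \eqref{nonlinear}, where $c\geq0$ is used to discard $-\int_B V^p\Delta c$, here you keep the term $-\alpha\int_B (V^-)^2c$ (exactly as in \eqref{eq:b98-1}) and bound it by $\alpha\|c\|_{L^\infty(B)}\|V^-\|_{L^2(B)}^2\leq C(\tilde T)\|V^-\|_{L^2(B)}^2$ using the hypothesised sup bound on $c$ (the analogue of \eqref{eq:A-Ttilde}); nonnegativity of $c$ is then deduced only at the very end from $u\geq 0$, as you do.
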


\begin{proof}
As above, we find that \eqref{eq:b98-1}, \eqref{eq:A-Ttilde} hold. Using the trace inequality \cite[Section V.3, Example 4]{Lion84} and Young's inequality, we further obtain
\begin{align*}
	\int_\Gamma (V^-)^2u \,\leq\, \|V^-\|_{L^3(\Gamma)}^2\|u\|_{L^3(\Gamma)} \,&\leq\, C(\tilde T)\|\nabla V^-\|_{L^2(B)}^{\frac{5}{3}}\|V^-\|_{L^2(B)}^{\frac{1}{3}} \\
	&\leq\, \frac{2D}{\beta}\|\nabla V^-\|_{L^2(B)}^2 + C(D,\beta,\tilde T)\|V^-\|_{L^2(B)}^2,
\end{align*}
and the rest of the argument is identical to that for bounded $B$.
\end{proof}

\section{Some useful parabolic-H\"{o}lder estimates}\label{c_k_time_est}

\begin{proposition}\label{pro:14}
Suppose $B\subset\mathbb{R}^3$ is a $C^{2,\gamma}$ domain, for some $0<\gamma<1$. 
If $u$  belongs to the parabolic  H\"{o}lder space $H^{2+\gamma,\frac{1}{2}(2+\gamma)}(\Gamma_T)$, and $c:B_T\rightarrow\mathbb{R}$ is determined by (\ref{c}) and (\ref{cflux}), then
\begin{align}
\max\left\{|c|^{(\gamma)}_{B_T},|\nabla c|^{(\gamma)}_{B_T}\right\}\leq C|u|^{(2+\gamma)}_{\Gamma_T},\label{cbound_again}
\end{align}
for some $C>0$.
\end{proposition}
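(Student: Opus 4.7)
My plan is to treat the elliptic Neumann problem \eqref{c}, \eqref{cflux} as a family of problems parameterised by $t$, and to transfer temporal Hölder regularity of the boundary source $u$ to $c$ and $\nabla c$ by combining the linearity of the equation with Hölder-space interpolation.

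First I would handle the spatial part. For each fixed $t$, applying the elliptic Schauder estimate already used to derive \eqref{c_k2} above (i.e.~\cite[Theorem 3.2, p.137]{lady1}) gives
\begin{align*}
\sup_{t\in[0,T]}\|c(\cdot,t)\|_{C^{2,\gamma}(\bar B)} \,\leq\, C\sup_{t\in[0,T]}\|u(\cdot,t)\|_{C^{1,\gamma}(\Gamma)} \,\leq\, C|u|^{(2+\gamma)}_{\Gamma_T},
\end{align*}
which immediately controls $\|c\|_{L^\infty(B_T)}$, $\|\nabla c\|_{L^\infty(B_T)}$, and both spatial Hölder seminorms of order $\gamma$ appearing on the left-hand side of \eqref{cbound_again}.

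Next, for the temporal part, the key observation is that by linearity the difference $c(\cdot,t) - c(\cdot,s)$ satisfies the same elliptic Neumann problem with boundary source $\beta(u(\cdot,t) - u(\cdot,s))$. The same Schauder estimate therefore yields
\begin{align*}
\|c(\cdot,t) - c(\cdot,s)\|_{C^{2,\gamma}(\bar B)} \,\leq\, C\|u(\cdot,t) - u(\cdot,s)\|_{C^{1,\gamma}(\Gamma)}.
\end{align*}
To convert the right-hand side into a bound carrying a factor of $|t-s|^{\gamma/2}$, I would use the standard Hölder interpolation inequality
\begin{align*}
\|f\|_{C^{1,\gamma}(\Gamma)} \,\leq\, C\|f\|_{C^0(\Gamma)}^{1/(2+\gamma)}\|f\|_{C^{2,\gamma}(\Gamma)}^{(1+\gamma)/(2+\gamma)}
\end{align*}
with $f = u(\cdot,t) - u(\cdot,s)$. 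The parabolic Hölder regularity $u \in H^{2+\gamma,1+\gamma/2}(\Gamma_T)$ provides $\|f\|_{C^0(\Gamma)} \leq |u|^{(2+\gamma)}_{\Gamma_T}|t-s|^{(2+\gamma)/2}$ and $\|f\|_{C^{2,\gamma}(\Gamma)} \leq 2|u|^{(2+\gamma)}_{\Gamma_T}$, so the interpolation produces $\|f\|_{C^{1,\gamma}(\Gamma)} \leq C|u|^{(2+\gamma)}_{\Gamma_T}|t-s|^{1/2}$. Since $|t-s| \leq T$, this in turn gives
\begin{align*}
\|c(\cdot,t)-c(\cdot,s)\|_{C^1(\bar B)}\,\leq\, C|u|^{(2+\gamma)}_{\Gamma_T}|t-s|^{1/2}\,\leq\, C T^{(1-\gamma)/2}|u|^{(2+\gamma)}_{\Gamma_T}|t-s|^{\gamma/2},
\end{align*}
yielding the required temporal Hölder bounds of order $\gamma/2$ for both $c$ and $\nabla c$.

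Combining the spatial and temporal bounds proves \eqref{cbound_again}. The only ingredient beyond the linear elliptic Schauder theory is the Hölder interpolation inequality above, which is standard (see e.g.~Lunardi or Krylov); everything else follows from applying the cited Schauder estimate twice, once to $c(\cdot,t)$ and once to the difference $c(\cdot,t) - c(\cdot,s)$. I do not anticipate any serious obstacle, although care must be taken to read off from the definition of $H^{2+\gamma,1+\gamma/2}$ the correct temporal Hölder exponents of $u$ and its derivatives needed for interpolation.
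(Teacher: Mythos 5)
Your overall strategy is the same as the paper's: for each fixed $t$ the elliptic Schauder estimate controls $\|c(\cdot,t)\|_{C^{2,\gamma}(\bar B)}$ by $\|u(\cdot,t)\|_{C^{1,\gamma}(\Gamma)}$, and by linearity the temporal difference $c(\cdot,t)-c(\cdot,s)$ solves the same Neumann problem with datum $\beta(u(\cdot,t)-u(\cdot,s))$, so the whole issue is to extract a factor $|t-s|^{\gamma/2}$ from $\|u(\cdot,t)-u(\cdot,s)\|_{C^{1,\gamma}(\Gamma)}$. It is at this point that your argument has a genuine gap: you claim that $u\in H^{2+\gamma,1+\frac{\gamma}{2}}(\Gamma_T)$ gives $\|u(\cdot,t)-u(\cdot,s)\|_{C^0(\Gamma)}\leq |u|^{(2+\gamma)}_{\Gamma_T}|t-s|^{(2+\gamma)/2}$. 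This misreads the definition of the parabolic H\"older norm: for $l=2+\gamma>2$ the norm $|u|^{(l)}_{\Gamma_T}$ contains no temporal H\"older seminorm of $u$ itself (indeed a H\"older exponent $>1$ in time would force $u$ to be constant in $t$); what it does contain is $\|\partial_t u\|_{C^0}$, $\langle\nabla u\rangle_t^{((1+\gamma)/2)}$ and $\langle\nabla^2 u\rangle_t^{(\gamma/2)}$, so the best available bound for the undifferentiated function is the Lipschitz one, $\|u(\cdot,t)-u(\cdot,s)\|_{C^0}\leq \|\partial_t u\|_{C^0}|t-s|$. Feeding this corrected exponent into your interpolation inequality $\|f\|_{C^{1,\gamma}}\leq C\|f\|_{C^0}^{1/(2+\gamma)}\|f\|_{C^{2,\gamma}}^{(1+\gamma)/(2+\gamma)}$ only yields $\|u(\cdot,t)-u(\cdot,s)\|_{C^{1,\gamma}(\Gamma)}\leq C|u|^{(2+\gamma)}_{\Gamma_T}|t-s|^{1/(2+\gamma)}$, and $1/(2+\gamma)\geq \gamma/2$ only when $\gamma(2+\gamma)\leq 2$, i.e. $\gamma\leq\sqrt{3}-1$. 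For $\gamma$ close to $1$ your route therefore does not deliver the required temporal H\"older exponent $\gamma/2$.

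The repair is exactly what the paper does: instead of interpolating against the $C^0$-norm of the difference, bound $\|u(\cdot,t)-u(\cdot,s)\|_{C^{1,\gamma}(\Gamma)}\leq C\|u(\cdot,t)-u(\cdot,s)\|_{C^{2}(\Gamma)}$ and estimate each term separately using the seminorms the parabolic norm actually contains: $\|u(\cdot,t)-u(\cdot,s)\|_{C^0}\leq \|\partial_t u\|_{C^0}|t-s|\leq T^{1-\gamma/2}|u|^{(2+\gamma)}|t-s|^{\gamma/2}$, $\|\nabla u(\cdot,t)-\nabla u(\cdot,s)\|_{C^0}\leq T^{1/2}\langle\nabla u\rangle_t^{((1+\gamma)/2)}|t-s|^{\gamma/2}$, and $\|\nabla^2 u(\cdot,t)-\nabla^2 u(\cdot,s)\|_{C^0}\leq \langle\nabla^2 u\rangle_t^{(\gamma/2)}|t-s|^{\gamma/2}$. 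This gives $\|c(\cdot,t)-c(\cdot,s)\|_{C^{2,\gamma}(\bar B)}\leq C(T)|u|^{(2+\gamma)}_{\Gamma_T}|t-s|^{\gamma/2}$ for all $0<\gamma<1$, from which the temporal parts of $|c|^{(\gamma)}_{B_T}$ and $|\nabla c|^{(\gamma)}_{B_T}$ follow; your spatial estimates are fine as written.
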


\begin{proof}
First we write out the relevant parabolic H\"{o}lder norms explicitly \cite{lady2}:
\begin{align}
|c|_{B_T}^{(\gamma)} = \langle c\rangle_x^{(\gamma)} + \langle c\rangle_t^{(\frac{\gamma}{2})} + \|c\|_{C^0(B_T)},
\end{align}
where, for example, $\langle c\rangle_x^{(\gamma)}$ is the H\"older constant of $c$, of order $\gamma$, with respect to $x$, on $B_T$.

Similarly, 
\begin{align}
|\nabla c|_{B_T}^{(\gamma)} = \langle \nabla c\rangle_x^{(\gamma)} + \langle \nabla c\rangle_t^{(\frac{\gamma}{2})} + \|\nabla c\|_{C^0(B_T)},
\end{align}
while for $u$ we have
\bq
|u|_{\Gamma_T}^{(2+\gamma)} & = & \|u\|_{C(\Gamma_T)} + \|\nabla u\|_{C(\Gamma_T)} + \|\nabla^2 u\|_{C(\Gamma_T)} + \|u_t\| _{C(\Gamma_T)}\nonumber\\
 & + & \langle\nabla^2 u\rangle_x^{(\gamma)} + \langle u_t\rangle_x^{(\gamma)} + \langle u_t\rangle_t^{(\frac{\gamma}{2})}\nonumber\\
 & + & \langle \nabla u\rangle_t^{(\frac{\gamma + 1}{2})} + \langle \nabla^2 u\rangle_t^{(\frac{\gamma}{2})}.
\eq

Thus, the $C^0$-norms of $c$ and $\nabla c$ are trivially dominated by $|u|_{\Gamma_T}^{(2+\gamma)}$, as a consequence of (\ref{c_k2}).

Next, by the homogeneity of (\ref{c}), (\ref{cflux}), we can apply an estimate of the form (\ref{c_k2}) to temporal differences, which gives, for fixed $t, t'$,
\begin{align*}\left\|\frac{c(\cdot,t) - c(\cdot,t')}{(t-t')^{\frac{\gamma}{2}}}\right\|_{C^{2,\gamma}(\bar B)}\leq C \left\|  \frac{u(\cdot,t) - u(\cdot,t')}{(t-t')^{\frac{\gamma}{2}}}\right\|_{C^{1,\gamma}(\Gamma)}.\end{align*}
Hence,
\bq\sup_x\left|\frac{c(x,t) - c(x,t')}{(t-t')^{\frac{\gamma}{2}}}\right| & \leq & C \left\|  \frac{u(\cdot,t) - u(\cdot,t')}{(t-t')^{\frac{\gamma}{2}}}\right\|_{C^2(\Gamma)}\nonumber\\
 & = & C\left[\sup_y\left|\frac{u(y,t) - u(y,t')}{(t-t')^{\frac{\gamma}{2}}}\right|\right.\nonumber\\
 & + & \sup_y\left|\frac{\nabla u(y,t) - \nabla u(y,t')}{(t-t')^{\frac{\gamma}{2}}}\right|\nonumber\\
 & + & \left.\sup_y\left|\frac{\nabla^2 u(y,t) - \nabla^2 u(y,t')}{(t-t')^{\frac{\gamma}{2}}}\right|~\right].
\eq
Taking the supremum over $t,t'\in[0,T]$ gives $\langle c\rangle_t^{(\frac{\gamma}{2})}\leq C|u|_{\Gamma_T}^{(2+\gamma)}$, while (\ref{c_k2}) trivially gives $\langle c\rangle_x^{(\gamma)}\leq C\sup_t\|u(\cdot,t)\|_{C^2(\Gamma)}\leq C|u|_{\Gamma_T}^{(2+\gamma)}$.

Finally, since
\begin{align*}\left\|\frac{\nabla c(\cdot,t) - \nabla c(\cdot,t')}{(t-t')^{\frac{\gamma}{2}}}\right\|_{C^{1,\gamma}(\bar B)}\leq\left\|\frac{c(\cdot,t) - c(\cdot,t')}{(t-t')^{\frac{\gamma}{2}}}\right\|_{C^{2,\gamma}(\bar B)}\leq C \left\|  \frac{u(\cdot,t) - u(\cdot,t')}{(t-t')^{\frac{\gamma}{2}}}\right\|_{C^2(\Gamma)},\end{align*}
repeating the same argument as above finishes the proof.
\end{proof}

\begin{proposition}\textbf{[\cite{KA1}, Lemma B.1]}\label{KA1_lem} For a function $f\in H^{2+l,\frac{1}{2}(2+l)}(\Omega_T)$, $0<l<1$, such that $\Omega_T=\bar\Omega\times[0,T]$,  $\Omega\subset\mathbb{R}$, we have
\begin{align}
|f|^{(l+1)}_{\Omega_T}\leq C(\Omega,l)\left(T^\delta|f|^{(l+2)}_{\Omega_T} + \|f(\cdot,0)\|_{C^2(\bar\Omega)}\right),
\end{align}
where $\delta=\min\{l/2,(1-l)/2\}$.
\end{proposition}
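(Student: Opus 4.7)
The plan is to decompose each of the five seminorms comprising $|f|^{(l+1)}_{\Omega_T}$ into a spatial contribution at $t=0$, to be absorbed into $\|f(\cdot,0)\|_{C^2(\bar\Omega)}$, and a time-oscillation remainder that picks up a positive power of $T$ from the one-unit-higher regularity encoded in $|f|^{(l+2)}_{\Omega_T}$. Without loss of generality I may assume $T\le 1$, for otherwise $T^{\delta}\ge 1$ and the claim reduces to the trivial inclusion $|f|^{(l+1)}\le C|f|^{(l+2)}$ absorbed into the constant. Writing
\begin{align*}
|f|^{(l+1)}_{\Omega_T}=\|f\|_C+\|\nabla f\|_C+\langle\nabla f\rangle_x^{(l)}+\langle f\rangle_t^{((l+1)/2)}+\langle\nabla f\rangle_t^{(l/2)},
\end{align*}
I would handle the sup-norms by the fundamental theorem of calculus in $t$, yielding $\|f\|_C\le\|f(\cdot,0)\|_C+T\|\p_t f\|_C$ and $\|\nabla f\|_C\le\|\nabla f(\cdot,0)\|_C+T^{(l+1)/2}\langle\nabla f\rangle_t^{((l+1)/2)}$; the two temporal Hölder seminorms are then immediate from $\|\p_t f\|_C\le|f|^{(l+2)}$ and $\langle\nabla f\rangle_t^{((l+1)/2)}\le|f|^{(l+2)}$ via
\begin{align*}
\frac{|f(x,t)-f(x,s)|}{(t-s)^{(l+1)/2}}\le\|\p_t f\|_C(t-s)^{(1-l)/2}\le T^{(1-l)/2}|f|^{(l+2)},\qquad \langle\nabla f\rangle_t^{(l/2)}\le T^{1/2}|f|^{(l+2)}.
\end{align*}
The $T$-exponents thus encountered -- $1$, $(l+1)/2$, $(1-l)/2$, $1/2$ -- all dominate $\delta=\min\{l/2,(1-l)/2\}\le 1/4$, so (for $T\le 1$) each of these four contributions is bounded by $CT^{\delta}|f|^{(l+2)}+C\|f(\cdot,0)\|_{C^2}$.

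The decisive and only non-immediate term is the spatial Hölder seminorm $\langle\nabla f\rangle_x^{(l)}$, which cannot be controlled by $|f|^{(l+2)}$ with any positive power of $T$ at a single fixed time. The key move is the splitting
\begin{align*}
\nabla f(x,t)-\nabla f(x',t)=\bigl[\nabla f(x,0)-\nabla f(x',0)\bigr]+\bigl[h(x)-h(x')\bigr],\qquad h(z):=\nabla f(z,t)-\nabla f(z,0),
\end{align*}
with the two brackets controlled independently. For the first, the mean-value theorem gives $|\nabla f(x,0)-\nabla f(x',0)|\le\|\nabla^2 f(\cdot,0)\|_C|x-x'|$, so division by $|x-x'|^l$ produces $(\mathrm{diam}\,\Omega)^{1-l}\|f(\cdot,0)\|_{C^2}$. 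For the second, observe that $\nabla h=\nabla^2 f(\cdot,t)-\nabla^2 f(\cdot,0)$, so the temporal Hölder control of $\nabla^2 f$ at level $l/2$, which sits inside $|f|^{(l+2)}$, yields $\|\nabla h\|_C\le T^{l/2}\langle\nabla^2 f\rangle_t^{(l/2)}\le T^{l/2}|f|^{(l+2)}$, whence $\langle h\rangle_x^{(l)}\le(\mathrm{diam}\,\Omega)^{1-l}T^{l/2}|f|^{(l+2)}$. Summing over all five pieces gives the claimed bound.

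The main obstacle is thus the spatial seminorm $\langle\nabla f\rangle_x^{(l)}$: one has to simultaneously subtract the time-zero value, in order to produce the $\|f(\cdot,0)\|_{C^2}$ term, and trade one space derivative for one factor of $|x-x'|$ so that the temporal Hölder regularity of $\nabla^2 f$ (at order $l/2$) can be activated. The two binding exponents arising from the argument, namely $l/2$ from this spatial step and $(1-l)/2$ from the temporal Hölder seminorm of $f$ itself, are precisely what force $\delta=\min\{l/2,(1-l)/2\}$; all the other contributions come with a strictly larger power of $T$ and are therefore harmless.
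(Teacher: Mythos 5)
Your proof is correct; note that the paper itself does not prove this proposition but cites it from \cite{KA1}, so there is no in-text argument to compare against beyond Remark \ref{KA1_rem}. Your route — subtracting the $t=0$ values, converting the sup-norms and temporal seminorms via the fundamental theorem of calculus, and handling $\langle\nabla f\rangle_x^{(l)}$ by the mean-value theorem applied to $\nabla f(\cdot,0)$ and to $h=\nabla f(\cdot,t)-\nabla f(\cdot,0)$ so that $\langle\nabla^2 f\rangle_t^{(l/2)}$ supplies the factor $T^{l/2}$ — is exactly the elementary mechanism the cited lemma (and the paper's remark, which stresses that only the control of Lipschitz constants by $\|\nabla f\|_{C^0}$ is needed) relies on, and it correctly identifies the two binding exponents $l/2$ and $(1-l)/2$ that produce $\delta$.
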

\begin{remark}\label{KA1_rem}
This lemma also works for any reasonable (Lipschitz, say) bounded domain $\Omega\subset\mathbb{R}^n$, since in the proof one merely needs the Lipschitz constant of a differentiable function $f$ to be controlled by $\|\nabla f\|_{C^0(\Omega)}$, which is indeed true on Lipschitz domains. 
\end{remark}

\end{appendix}

%
\def\cprime{$'$}

\end{document}